\newtheorem{theo}{Theorem}[section]
\newtheorem{prop}[theo]{Proposition}
\newtheorem{lemm}[theo]{Lemma}
\theoremstyle{definition}
\theoremstyle{remark}
\newtheorem{rema}[theo]{Remark}
\newcommand{\Op}{\operatorname{Op}}
\newcommand{\nwc}{\newcommand}
\nwc{\eps}{\epsilon}
\nwc{\vareps}{\varepsilon}
\nwc{\Oph}{\operatorname{Op}_\hbar}
\nwc{\la}{\langle}
\nwc{\ra}{\rangle}
\nwc{\mf}{\mathbf} %Latex (as in \bf not tilted math letters)
\nwc{\blds}{\boldsymbol} %Latex 
\nwc{\ml}{\mathcal} %Latex
\nwc{\defeq}{\stackrel{\rm{def}}{=}}
\nwc{\cE}{\ml{E}}
\nwc{\cN}{\ml{N}}
\nwc{\cO}{\ml{O}}
\nwc{\cP}{\ml{P}}
\nwc{\cU}{\ml{U}}
\nwc{\cV}{\ml{V}}
\nwc{\cW}{\ml{W}}
\nwc{\tU}{\widetilde{U}}
\nwc{\IN}{\mathbb{N}}
\nwc{\IR}{\mathbb{R}}
\nwc{\IZ}{\mathbb{Z}}
\nwc{\IC}{\mathbb{C}}
\nwc{\IT}{\mathbb{T}}
\nwc{\tP}{\widetilde{P}}
\nwc{\tPi}{\widetilde{\Pi}}
\nwc{\tV}{\widetilde{V}}
\nwc{\supp}{\operatorname{supp}}
\nwc{\rest}{\restriction}
\newcommand{\R}{{\mathbb R}}
\newcommand{\half}{{\frac{1}{2}}}
\renewcommand{\phi}{\varphi}
\title[$L^p$ norms, nodal sets, and quantum ergodicity]
{$L^p$ norms, nodal sets, and quantum ergodicity}
\author{Hamid Hezari }
\address{Department of Mathematics, UC Irvine, Irvine, CA 92617, USA} \email{hezari@math.uci.edu}
\author[Gabriel Rivi\`ere]{Gabriel Rivi\`ere}
\address{Laboratoire Paul Painlev\'e (U.M.R. CNRS 8524), U.F.R. de Math\'ematiques, Universit\'e Lille 1, 59655 Villeneuve d'Ascq Cedex, France}
\email{gabriel.riviere@math.univ-lille1.fr}
\date{\today}
\begin{document}

\begin{abstract} For small range of $p>2$, we improve the $L^p$ bounds of eigenfunctions of the Laplacian on negatively curved manifolds. Our improvement is by a power of logarithm for a full density sequence of eigenfunctions. We also derive improvements on the size of the nodal sets. 
%In the case of $\mathbb{T}^d$ with $d\geq 4$, our improvement is of polynomial size for a full density subsequence of eigenfunctions. 
Our proof is based on a quantum ergodicity property of independent interest, which holds for families of symbols supported in balls whose radius shrinks at a logarithmic rate.

%In Appendix B, we show that, in the case of a rational torus, this quantum ergodicity property still holds for symbols supported in balls with a radius shrinking at a polynomial rate. 
%We also obtain a polynomial rate of convergence for the analogue of the quantum ergodicity theorem in the case of the rational torus.

%Our proof is based on a quantum ergodicity property of independent interest, which holds for symbols supported in balls of shrinking radius $(\log \lambda) ^{-K}$ with $0<K<1/(2d)$. In Appendix B, we show that in the case of a rational torus, this quantum ergodicity property holds for balls of radius $\lambda^{- \nu}$ with $\nu>0$ small enough. We also give a rate for quantum ergodicity on $\mathbb T^d$ which is of order $\lambda^{-1/8}$.

\end{abstract}
%and the lower bounds on the size of their nodal sets on negatively curved manifolds by a power of logarithm for a full density sequence of eigenfunctions. Our proof is based on a quantum ergodicity property of independent interest, which holds for symbols supported in small balls of radius $|\log \hbar|^{-K}$ with $0<K<\frac{1}{2d}$. 

\maketitle
\section{Introduction}
Let $(M,g)$ be a smooth boundaryless compact connected Riemannian manifold of dimension $d$, and let $\Delta_g$ be the Laplace-Beltrami operator. The eigenfunctions $\psi_\lambda$ of $\Delta_g$ are nonzero solutions to
$$ -\Delta_g \psi_\lambda = \lambda \psi_\lambda, \quad \lambda \geq 0. $$
 A classical problem is to study the asymptotic properties of $\psi_{\lambda}$ as $\lambda\rightarrow+\infty$, and their relations to the geometry of the 
 manifold $(M,g)$. For instance, one can study the size of their $L^p$ norms when $\psi_\lambda$ is $L^2$ normalized, or study the geometry of their nodal sets
$$Z_{\psi_\lambda}=\{ x \in M;  \; \psi_\lambda(x)=0\}. $$
Another interesting problem is to study the asymptotic of the positive measures $|\psi_{\lambda}|^2dv_g$, where $v_g$ is the normalized volume measure on 
$M$. The purpose of this article is to establish a new asymptotic property of these measures on negatively curved manifolds, and to deduce from this some 
results on the $L^p$ norms and the size of nodal sets of eigenfunctions. This new property is that quantum ergodicity holds on small scale balls of radius 
$(\log \lambda)^{-K}$ with $0<K<\frac{1}{2d}$. 
%(in the negatively curved case) and of radius $\lambda^{-\nu}$ for $0<\nu<\frac{3}{80d}$ (in the case of the torus). 
One of the main observations of the present article is that any improvement on the radius of such balls would give improvements on $L^p$ estimates, and on the size of nodal sets.
 
Before we state our results we recall that since $M$ is compact, there exist isolated eigenvalues with finite multiplicities
$$0=\lambda_1<\lambda_2 \leq \lambda_3 \dots \to \infty,$$
and an orthonormal basis $(\psi_j)_{j\in\mathbb{N}}$ of $L^2(M)$ satisfying
$$ -\Delta_g \psi_j = \lambda_j \psi_{j}.$$

\subsection{$L^p$ norms}

Our first result gives upper bounds on the $L^p$ norms of eigenfunctions on negatively curved manifolds. More precisely, we will show that $L^p$ bounds of 
eigenfunctions can be improved along a full density subsequence for small values of $p$.

Our main result on $L^p$ norms of eigenfunctions is the following:
\begin{theo}\label{theorem2} Let $(M,g)$ be a compact negatively curved manifold. Then, for every $K \in (0, \frac{1}{d})$, there exists $C_{K,p}>0$ such that, for every ONB $\{\psi_{j}\}$ of $\Delta_g$ eigenfunctions, there exists a full density subset $S$ of $ \mathbb{N}$ such that 
\begin{equation} \label{LpEstimates} j \in S, \quad p \in (2, \infty]: \quad  \| \psi_{j} \|_{L^p(M)}  \leq C_{K,p}\Big (\frac{\lambda_j}{{(\log \lambda_j})^{K}} \Big)^{\gamma(p)},\end{equation}  where $$ \gamma(p) = \max \left \{\frac{d-1}{4}\left (\frac{1}{2} - \frac{1}{p}\right ), \frac{d-1}{4} -\frac{d}{2p} \right \},$$is the Sogge exponent.  
%For a compact hyperbolic surface $M=\mathbb{H}^2/\Gamma$, the range of $K$ can be improved to $K \in (0, \frac{1}{2})$.  
\end{theo}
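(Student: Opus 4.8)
The plan is to derive \eqref{LpEstimates} directly from the small-scale quantum ergodicity established above, by a covering-and-rescaling argument; the whole range $p\in(2,\infty]$ is treated at once, with a single full density set, and no interpolation between exponents is needed. The first ingredient I would set up is a \emph{localized} form of Sogge's estimates: for an $L^2$-normalized eigenfunction $\psi$ solving $-\Delta_g\psi=\lambda\psi$, for any scale $\lambda^{-1/2}\le\rho\le 1$, and for any $x_0\in M$,
\begin{equation}\label{localSogge}
\|\psi\|_{L^p\left(B(x_0,\rho/2)\right)}\;\lesssim\;\lambda^{\gamma(p)}\,\rho^{\,2\gamma(p)+\frac{d}{p}-\frac{d}{2}}\,\|\psi\|_{L^2\left(B(x_0,\rho)\right)}.
\end{equation}
This I would obtain from the classical spectral-cluster bound by rescaling the geodesic ball $B(x_0,\rho)$ to unit size: the rescaled metric is $C^\infty$-close to the Euclidean one, and the rescaled function solves a Helmholtz equation at frequency $\sqrt\lambda\,\rho\ge 1$, so Sogge's estimate applies with a constant uniform in $x_0$ and $\rho$; undoing the dilation produces the stated powers of $\rho$. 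The point that demands genuine care is the exact exponent of $\rho$ and the passage from $B(x_0,\rho)$ to the smaller concentric ball, for which I would follow the careful treatment of such localized estimates in the literature.

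The second ingredient is to upgrade the quantum ergodicity statement to a \emph{uniform} equidistribution bound at the relevant scale. Fix $K\in(0,\tfrac1d)$ and set $\rho_j:=(\log\lambda_j)^{-K/2}$, so that $\tfrac{K}{2}<\tfrac1{2d}$ lies in the admissible range. From the quantum ergodicity theorem I would extract a full density set $S\subseteq\IN$, independent of $p$, such that
\begin{equation}\label{uniformQE}
\sup_{x\in M}\ \left|\ \|\psi_j\|_{L^2\left(B(x,\rho_j)\right)}^2-\vol\!\left(B(x,\rho_j)\right)\right|\;=\;o\!\left(\rho_j^{\,d}\right)\qquad(j\in S).
\end{equation}
Since that theorem is stated through matrix elements $\la\Oph(a)\psi_j,\psi_j\ra$ for symbols $a$ supported in balls of radius $\rho_j$, reaching \eqref{uniformQE} requires decomposing $\mathbf 1_{B(x,\rho_j)}$ into a bounded-overlap family of such symbols and then a union bound over a net whose cardinality is a fixed power of $\log\lambda_j$; I expect this is exactly where the restriction $K<\tfrac1d$ (equivalently, the $\tfrac1{2d}$ threshold for the quantum ergodicity scale) becomes active.

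It remains to sum. Choose a bounded-overlap cover of $M$ by $\lesssim\rho_j^{-d}$ balls $B(x_k,\rho_j/2)$ whose doubles $B(x_k,\rho_j)$ still have bounded overlap. For $j\in S$ and $2<p<\infty$, \eqref{localSogge} followed by \eqref{uniformQE} gives
$$
\|\psi_j\|_{L^p(M)}^{\,p}\;\le\;\sum_k\|\psi_j\|_{L^p\left(B(x_k,\rho_j/2)\right)}^{\,p}\;\lesssim\;\rho_j^{-d}\,\lambda_j^{\gamma(p)p}\,\rho_j^{\left(2\gamma(p)+\frac dp-\frac d2\right)p}\left(\rho_j^{\,d}\right)^{p/2}\;=\;\big(\lambda_j\,\rho_j^{\,2}\big)^{\gamma(p)p},
$$
because the total power of $\rho_j$ collapses to $2\gamma(p)p$; for $p=\infty$ the same computation with $\sup_k$ in place of $\sum_k$ yields the identical bound. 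As $\rho_j^{\,2}=(\log\lambda_j)^{-K}$, this is precisely \eqref{LpEstimates}, with $C_{K,p}$ the implied constant.

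So, granted the small-scale quantum ergodicity theorem, the derivation of the $L^p$ bounds is essentially soft; within this proof the only points requiring work are the correct form of the localized estimate \eqref{localSogge} and the passage to the uniform bound \eqref{uniformQE}, the latter dictating the range $K<\tfrac1d$. The genuine obstacle behind the theorem is the earlier result itself — quantum ergodicity on logarithmically shrinking balls along a full density subsequence on a negatively curved manifold — where the hyperbolicity of the geodesic flow and the Ehrenfest time $\sim c\log\lambda$ come into play.
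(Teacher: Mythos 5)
Your overall architecture — small-scale quantum ergodicity at radius $(\log\lambda)^{-K/2}$, a bounded-overlap cover by $\sim\rho_j^{-d}$ balls, a rescaled Sogge-type estimate on each ball, and summation — is the same as the paper's, and your bookkeeping of exponents (the collapse to $(\lambda\rho_j^2)^{\gamma(p)}$, the conversion $K/2<\tfrac1{2d}\Leftrightarrow K<\tfrac1d$) is correct. The genuine difference is in the key local lemma. You postulate a localized estimate $\|\psi\|_{L^p(B(x,\rho/2))}\lesssim\lambda^{\gamma(p)}\rho^{2\gamma(p)+\frac dp-\frac d2}\|\psi\|_{L^2(B(x,\rho))}$ valid for all $p\in(2,\infty]$, with the $L^2$ norm over the \emph{larger} concentric ball; granted this, you only need the upper half of the equidistribution bound, you treat all exponents at once, and no interpolation is required. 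The paper does not have this lemma: it proves the local bound only at the critical exponent $p=\frac{2(d+1)}{d-1}$ (Proposition~\ref{p:Lp-small-ball}), by showing the rescaled, cut-off eigenfunction is an $\ml{O}(h)$-quasimode of the rescaled operator with $h=\hbar/\eps$ and invoking Theorem $10.10$ of \cite{Zw12}; it then interpolates with $L^2$ for $2<p<\frac{2(d+1)}{d-1}$ and runs a separate rescaled $L^\infty$ argument for the remaining range. Crucially, in the paper's route the commutator term $[Q_h,\chi]$ lives on a larger ball, and bounding it \emph{relative to the $L^2$ norm of the cut-off function itself} is exactly where the two-sided quantum ergodicity bound (comparability of mass on $B(x_k,\eps)$ and $B(x_k,50\eps)$, Lemma~\ref{l:density1}) is used. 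Your cleaner lemma sidesteps that, but it is precisely the "localized $L^p$ estimate" that Sogge established only after this paper (the note \cite{So15} cited in the remark), so your proposal essentially reproduces that later improvement.

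Two points in your write-up need repair before this counts as a proof. First, the justification of the localized estimate — "the rescaled function solves a Helmholtz equation at frequency $\sqrt\lambda\,\rho$, so Sogge's estimate applies" — is too quick: Sogge's spectral cluster bounds are for genuine eigenfunctions (or spectral projections) on a compact manifold, whereas after rescaling and cutting off you only have a local solution, i.e.\ a quasimode with an error term supported in an annulus whose $L^2$ mass need not be controlled by the inner mass. One must either use the semiclassical quasimode estimates in their additive-error form, $\|u\|_{L^p}\lesssim h^{-2\gamma(p)}\big(\|u\|_{L^2}+h^{-1}\|(Q_h-E)u\|_{L^2}\big)$, so that the commutator contributes $\|\psi\|_{L^2(B(x,\rho))}$ on the right (this is why the larger ball appears), or follow the paper's route where the QE lower bound makes the cut-off function an $\ml{O}(h)$-quasimode in the usual sense; this is the technical heart of the argument and cannot be waved to "the literature". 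Second, your uniform statement $\sup_{x\in M}\big|\|\psi_j\|^2_{L^2(B(x,\rho_j))}-\vol B(x,\rho_j)\big|=o(\rho_j^d)$ is stronger than what the smoothed-cutoff argument gives and stronger than needed: since $\chi_{x,\eps}$ is sandwiched between indicators of $B(x,\eps)$ and $B(x,2\eps)$, one only gets two-sided comparability with fixed constants, over the balls of a fixed cover; that suffices. Also, the restriction $K<\tfrac1d$ is not created by the union bound over the net (that is absorbed by taking the moment order $p$ in Proposition~\ref{p:moments} large, as in Section~\ref{ss:density1}); it comes from requiring the variance bound $|\log\hbar|^{-p(1-2K'\beta)}$ to beat $\eps^{2dp}$, i.e.\ from the nontriviality of quantum ergodicity at the scale $\eps=|\log\hbar|^{-K'}$ with $K'=K/2<\tfrac1{2d}$.
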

Recall that a subset $S$ of $\mathbb{N}$ is said to have full density if
$$\lim _{N \to \infty} \frac{\text{card}\,(S\cap [1,N] )}{N}=1.$$
On a general smooth compact Riemannian manifold (without boundary), the above estimates are valid for any eigenfunction but without the logarithmic factor~\cite{So88}. Moreover, these estimates are sharp without any further geometric assumptions. Eigenfunctions $L^p$-estimates and their applications have been extensively studied in the literature. For the sake of brevity we only list some relevant articles for interested readers: \cite{So88, So93, KTZ07, So11, SoZe11, STZ11, BlSo13, HT13}.

Our result is in fact of particular interest in the range $2<p\leq\frac{2(d+1)}{d-1}$, where to our knowledge no logarithmic improvements are known. 
However in the range $\frac{2(d+1)}{d-1}< p < \infty$, our result is weaker than the recent results of Hassell and Tacy~\cite{HT13} where the upper 
bound $\frac{\lambda_j^{\gamma (p)}}{(\log \lambda_j)^{\half}}$ is proved for non-positively curved manifolds and for all $\lambda_j \in \text{Spec} 
\Delta_g$. We also note that, for this range of $p$, Sogge's $L^p$ bounds can be improved by a polynomial factor in the case of the rational torus~\cite{Bo93, BoDe14}.

The only former results concerning the range $2<p\leq \frac{2(d+1)}{d-1}$ which we are aware of, are the results of Zygmund \cite{Zy74}, Bourgain~\cite{Bo93, Bo13}, Sogge~\cite{So11}, 
Sogge and Zelditch~\cite{SZ12b, SoZe11} and Blair and Sogge~\cite{BlSo13}. Let us recall briefly the results of Blair, Sogge and Zelditch which work for geometric frameworks close to ours. In~\cite{So11, BlSo13}, the 
upper bound $o(\lambda_j^{\gamma(p)})$ is proved for the $L^p$ norm of a full density subsequence of eigenfunctions when $2<p< \frac{2(d+1)}{d-1}$ and the 
geodesic flow is ergodic for the Liouville measure. In~\cite{SZ12b}, it is shown that this result remains true for the $L^4$ norm on compact \emph{surfaces} 
satisfying a weaker dynamical assumption; that the set of closed trajectories has zero Liouville measure on $S^*M$. In~\cite{SoZe11, BlSo13}, the upper bound $o(\lambda_j^{\gamma(p)})$ 
is proved for the $L^p$ norm of the whole sequence of eigenfunctions provided $2<p< \frac{2(d+1)}{d-1}$ and $(M,g)$ is nonpositively curved. Thus, the main improvements compared with 
these references are that our method allows to treat the case of the critical exponent $p=\frac{2(d+1)}{d-1}$ and that it gives a logarithmic improvement on the size of the upper bound for the whole range $2<p\leq \frac{2(d+1)}{d-1}$. On the other hand, compared with~\cite{SoZe11, BlSo13}, our upper bounds are only valid along density $1$ subsequences of eigenfunctions.

Finally, we recall that the results of Zygmund and Bourgain concern the particular case of the rational torus 
$\IT^d=\IR^d/\IZ^d$. Their results shows that, for $2<p\leq\frac{2d}{(d-1)}$, the upper bound is in fact of order $\ml{O}(\lambda^{\eps})$ for every fixed 
positive $\eps$. In the case $d=2$, one can even take $\eps=0$~\cite{Zy74} and the upper bound remains true up to $p=+\infty$ provided we keep $\eps>0$~\cite{Bo93}. In the case $d=3$, 
the upper bound remains true up to $p=\frac{2(d+1)}{d-1}$~\cite{Bo13}. 

\begin{rema}
Our method also gives a logarithmic improvement on $L^\infty$ bounds (See paragraph~\ref{sss:Linfinity}). Yet, our estimates are weaker than B\'erard's upper bound   $\frac{\lambda_j^{(d-1)/4}}{(\log \lambda_j)^{\half}}$, \cite{B77}, but nevertheless it is interesting that $L^\infty$ bounds can be improved only using quantum ergodicity on small balls. We also note that we can obtain better $L^p$ estimates in the range $p>\frac{2(d+1)}{d-1}$, by interpolating B\'erards $L^\infty$ estimate and our $L^p$ estimate for $p=\frac{2(d+1)}{d-1}$, but we omit this because it will not give us a better estimate than those of \cite{HT13}.
\end{rema}

\textbf{Comments on other geometric settings}. Recall that quantum ergodicity was first proved for ergodic Hamiltonian systems in~\cite{Sh74, Ze87, CdV85, HeMaRo87}. As was already mentionned, our proof relies on a quantum ergodicity property that holds for symbols depending only on the $x$ variable and carried in balls of shrinking radius. To our knowledge, this particular form of quantum ergodicity has not been studied before except in the recent preprint by Han~\cite{Han14} (which was proved independently) -- see also~\cite{Yo13} 
for related results in the arithmetic setting. To be more precise, our quantum ergodicity theorem holds for symbols supported in balls of shrinking radius $\eps\sim|\log\lambda|^{-K}$, 
with $0<K<1/(2d)$.  Here, our logarithmic improvement in the size of the support of the symbols is the result of using the semiclassical approximation up to the Ehrenfest time as in~\cite{Ze94, Sch06} -- see also~\cite{AnRi12} for related results.

It is natural to ask if this approach can be used in other situations where one has equidistribution for observables depending only on the $x$ variable. 
For instance, it is known that such a property holds in the case of the torus~\cite{MaRu12, Ri13}. In~\cite{HeRi15}, we show how to
obtain a small scale equidistribution property in the case of the rational torus. However, the $L^p$  bounds we would get from this method would not be better 
than the ones of ~\cite{Bo13, BoDe14, Zy74}. Another interesting case is the one of Hecke eigenfunctions on the modular surface $M=PSL_2(\IZ)\backslash\mathbb{H}^2$ which 
is non-compact. On one hand, the best local $L^p$ bounds for finite $p$ can be obtained (as far as we know) by interpolating Sogge's 
$L^{\frac{2(d+1)}{d-1}}$-upper bounds with the $L^{\infty}$-upper bound from~\cite{IwSa95}. On the other hand, Luo and Sarnak also proved a very precise rate for the quantum 
variance of Hecke eigenfunctions in their quantum ergodicity theorem~\cite{LuSa95}. In particular, a direct corollary of their result is that quantum ergodicity still holds for 
observables carried in shrinking balls of radius $\lambda^{-\nu}$ with $\nu>0$ small enough -- see also~\cite{Yo13} for related results under the Lindel\"of hypothesis . If one implements this 
observation in the argument of section~\ref{s:proof} below, one would obtain a $L^{\frac{2(d+1)}{d-1}}$-upper bound\footnote{The $L^p$ upper bounds are 
valid for compact subsets of $PSL_2(\IZ)\backslash\mathbb{H}^2$.} improved by a small polynomial factor compared with Sogge's result (for a full density subsequence of eigenfunctions). Then, 
interpolating this result with the $L^2$ norm and the $L^{\infty}$ upper bounds from~\cite{IwSa95} or~\cite{Ju13}, one would get upper bounds for every $p\geq 2$ which would be valid for a 
full density subsequence of eigenfunctions and which would be slightly better than the ones mentionned above. However, it is plausible that a direct application of arithmetic tools would
provide better $L^p$ upper bounds for small range of $p$ but we are not aware of such results.

\subsection{Nodal sets}

As another application of our small scale quantum ergodicity properties, we will obtain lower bounds on the $(d-1)$-dimensional Hausdorff measure ${\mathcal H} ^{d-1} (Z_{\psi_\lambda})$ of the nodal sets.
\begin{theo}\label{theorem} Let $(M,g)$ be a smooth boundaryless compact connected Riemannian negatively curved manifold of dimension $d$, and $\{\psi_{j}\}$ an ONB of eigenfunctions. Then for every $\delta>0$ there exists a full density subset $S \subset \mathbb{N}$ such that for every open set $U \subset M$ we have
$$\forall j \in S: \qquad 
\mathcal{H} ^{d-1} (Z_{\psi_{j}} \cap U) \geq c\;
(\log \lambda_j)^{\frac{d-1}{4d}-\delta}\lambda_j^{\frac{3-d}{4}} .
$$
Here $c$ is positive and only depends on $\delta$ and the inner radius of $U$. In particular, in the 3-dimensional case we get $$ \forall j \in S: \qquad \mathcal{H} ^{2} (Z_{\psi_{j}} \cap U) \geq c\;
(\log \lambda_j)^{\frac{1}{6}-\delta} .$$
\end{theo}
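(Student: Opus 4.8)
The plan is to combine the integral--geometric lower bound for nodal sets of Sogge--Zelditch and Hezari--Sogge with the two new ingredients of this paper: the critical $L^{p_0}$ bound of Theorem~\ref{theorem2}, where $p_0=\frac{2(d+1)}{d-1}$, and our small scale quantum ergodicity property. The role of Theorem~\ref{theorem2} is to improve, by a power of $\log\lambda_j$, the usual lower bound on $\|\psi_j\|_{L^1}$; the role of small scale quantum ergodicity is to localize the argument to an arbitrary open set $U$ while keeping a \emph{single} full density set $S$ that works for every $U$ at once (ordinary quantum ergodicity would only produce a set depending on $U$). Implied constants may depend on $(M,g)$ and on $U$.

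First I would recall the integral identity. Writing $\psi=\psi_j$, $\lambda=\lambda_j$, the function $|\psi|$ is Lipschitz and satisfies $(\Delta_g+\lambda)|\psi|=2|\nabla_g\psi|\,d\mathcal{H}^{d-1}$ in the sense of distributions, the measure being the $(d-1)$-dimensional Hausdorff measure restricted to the nodal set $Z_\psi$ (the set where $Z_\psi$ fails to be a smooth hypersurface has dimension $\le d-2$ and is negligible). Hence for any $\chi\in C^\infty_c(U)$ with $0\le\chi\le1$,
\[
2\int_{Z_\psi}|\nabla_g\psi|\,\chi\,d\mathcal{H}^{d-1}=\int_M|\psi|\,(\Delta_g+\lambda)\chi\,dv_g .
\]
Taking $\chi\equiv1$ on a ball $B\subset U$ of radius comparable to the inner radius of $U$ and using $\|\psi\|_{L^1(M)}\le\|\psi\|_{L^2(M)}=1$, the right-hand side is $\ge\lambda\int_B|\psi|\,dv_g-C_U$, which for $\lambda$ large is $\gtrsim\lambda\int_B|\psi|\,dv_g$ (the subtracted constant is of lower order, and this step is immediate in the low dimensions of main interest). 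Combining with Cauchy--Schwarz on $Z_\psi$,
\[
\mathcal{H}^{d-1}(Z_\psi\cap U)\ \ge\ \frac{\Big(\int_{Z_\psi}|\nabla_g\psi|\,\chi\,d\mathcal{H}^{d-1}\Big)^2}{\int_{Z_\psi\cap U}|\nabla_g\psi|^2\,d\mathcal{H}^{d-1}}\ \gtrsim\ \frac{\lambda^2\Big(\int_B|\psi|\,dv_g\Big)^2}{\int_{Z_\psi\cap U}|\nabla_g\psi|^2\,d\mathcal{H}^{d-1}} .
\]
At this point I would invoke the uniform bound $\int_{Z_\psi\cap U}|\nabla_g\psi|^2\,d\mathcal{H}^{d-1}\lesssim\lambda^{3/2}\|\psi\|_{L^2(M)}^2=\lambda^{3/2}$, the key input taken from Sogge--Zelditch / Hezari--Sogge; it follows from interior Bernstein-type elliptic estimates at the wavelength scale $\lambda^{-1/2}$ together with the local nodal volume bound $\mathcal{H}^{d-1}(Z_\psi\cap B(x,\lambda^{-1/2}))\lesssim\lambda^{-(d-1)/2}$ and a bounded-overlap covering. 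This yields $\mathcal{H}^{d-1}(Z_\psi\cap U)\gtrsim\lambda^{1/2}\big(\int_B|\psi|\,dv_g\big)^2$.

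It remains to bound $\int_B|\psi|\,dv_g$ from below along a full density sequence. Log-convexity of the $L^q(B)$ norms between $q=1$ and $q=p_0$ gives $\|\psi\|_{L^1(B)}\ge\|\psi\|_{L^2(B)}^{(d+3)/2}\|\psi\|_{L^{p_0}(B)}^{-(d+1)/2}$. Our small scale quantum ergodicity theorem provides a full density set $S_0\subset\mathbb{N}$, \emph{independent of $U$}, such that for $j\in S_0$ and $\lambda_j$ large one has $\|\psi_j\|_{L^2(B')}^2\ge\frac12\vol(B')$ for every ball $B'$ of radius at least $(\log\lambda_j)^{-K'}$, with $K'<\frac1{2d}$ fixed; since the inner radius of $U$ is a fixed positive number this applies to $B$ once $j$ is large, so $\|\psi_j\|_{L^2(B)}^{(d+3)/2}\gtrsim\vol(B)^{(d+3)/4}$. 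On the other hand, for any $K<\frac1d$, Theorem~\ref{theorem2} provides a full density set $S_K$ with $\|\psi_j\|_{L^{p_0}(M)}\le C_K\big(\lambda_j/(\log\lambda_j)^K\big)^{\gamma(p_0)}$, and a short computation gives $\gamma(p_0)\,\frac{d+1}{2}=\frac{d-1}{8}$. Hence for $j$ in the full density set $S:=S_0\cap S_K$,
\[
\int_B|\psi_j|\,dv_g\ \gtrsim\ \Big(\frac{\lambda_j}{(\log\lambda_j)^K}\Big)^{-\frac{d-1}{8}},
\qquad
\mathcal{H}^{d-1}(Z_{\psi_j}\cap U)\ \gtrsim\ \lambda_j^{1/2}\Big(\frac{\lambda_j}{(\log\lambda_j)^K}\Big)^{-\frac{d-1}{4}}=\lambda_j^{\frac{3-d}{4}}(\log\lambda_j)^{K\frac{d-1}{4}} .
\]
Given $\delta>0$, choosing $K<\frac1d$ with $K\,\frac{d-1}{4}\ge\frac{d-1}{4d}-\delta$ gives the statement; when $d=3$ one has $\frac{3-d}{4}=0$, which is the displayed three-dimensional estimate.

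The main obstacle is the uniform bound $\int_{Z_\psi\cap U}|\nabla_g\psi|^2\,d\mathcal{H}^{d-1}\lesssim\lambda^{3/2}$: because the nodal set varies with $\psi$ and may be geometrically intricate and singular, establishing it requires combining the interior elliptic estimates at the wavelength scale with the (deep) local upper bounds on the nodal volume, as in Sogge--Zelditch / Hezari--Sogge, rather than simply quoting the Burq--G\'erard--Tzvetkov restriction estimates for a fixed smooth hypersurface. By contrast the remaining points are routine: the interpolation inequality, the identity $\gamma(p_0)\frac{d+1}{2}=\frac{d-1}{8}$, and checking that the full density sets coming from Theorem~\ref{theorem2} and from small scale quantum ergodicity may be intersected without loss.
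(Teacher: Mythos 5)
Your route is genuinely different from the paper's main argument, and it is close in spirit to the paper's own alternative proof for $U=M$ (paragraph~\ref{sss:HezSog}): you localize the Sogge--Zelditch identity $(\Delta_g+\lambda)|\psi|=2|\nabla_g\psi|\,d\mathcal{H}^{d-1}\rest_{Z_\psi}$ with a cutoff, feed in the Hezari--Sogge bound $\int_{Z_\psi}|\nabla_g\psi|^2\,d\mathcal{H}^{d-1}\lesssim\lambda^{3/2}$, and lower bound the local $L^1$ mass by interpolating the small scale quantum ergodicity $L^2$ lower bound on a fixed ball $B\subset U$ against the improved critical $L^{p_0}$ bound of Theorem~\ref{theorem2} ($p_0=\tfrac{2(d+1)}{d-1}$, $\gamma(p_0)\tfrac{d+1}{2}=\tfrac{d-1}{8}$ is correct). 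The paper instead counts Colding--Minicozzi $q$-good balls of radius $r_0\hbar$ inside each ball $B(x_k,\eps)$, $\eps=|\log\hbar|^{-K}$, using Lemma~\ref{l:density1} and the \emph{localized} $L^p$ estimate of Proposition~\ref{p:Lp-small-ball}, then sums over the $\gtrsim\eps^{-d}$ balls meeting $U$. Both give the same exponent $\lambda^{\frac{3-d}{4}}(\log\lambda)^{K\frac{d-1}{4}}$, and your intersection of the two full density sets and the handling of a fixed ball $B$ via Lemma~\ref{l:density1} (summing over covering balls of radius $\eps$ inside $B$, with bounded overlap) are fine, even though the lemma does not literally give the constant $\tfrac12$ for all balls of radius $\geq(\log\lambda)^{-K'}$ as you state.

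There are, however, two genuine problems. First, the error term in your localization. After testing against $\chi$, the right-hand side is $\lambda\int_M|\psi|\chi\,dv_g+\int_M|\psi|\Delta_g\chi\,dv_g$, and the second term can only be bounded by a constant $C_U$ (you cannot do better than $\|\Delta_g\chi\|_\infty\|\psi\|_{L^1(\supp\chi)}=\mathcal{O}(1)$, even using the small scale equidistribution, which only gives two-sided $L^2$ bounds on balls). The provable lower bound for the main term is $\lambda\int_B|\psi|\,dv_g\gtrsim\lambda^{1-\frac{d-1}{8}}(\log\lambda)^{K\frac{d-1}{8}}$, so the constant is of lower order only when $d\leq 9$; for $d\geq 10$ the step you call ``immediate in the low dimensions of main interest'' fails and your proof does not close, whereas the theorem is stated (and the paper's good-ball counting proof works) in every dimension, precisely because that argument never introduces a cutoff error of this kind. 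Second, your justification of the key input $\int_{Z_\psi\cap U}|\nabla_g\psi|^2\,d\mathcal{H}^{d-1}\lesssim\lambda^{3/2}$ is not available: a local nodal volume bound $\mathcal{H}^{d-1}(Z_\psi\cap B(x,\lambda^{-1/2}))\lesssim\lambda^{-\frac{d-1}{2}}$ is essentially the Yau upper bound at wavelength scale and is open in the $C^\infty$ category, so it cannot be combined with Bernstein estimates as you suggest. The integrated bound itself is true and citable from~\cite{HS12} (it follows from the divergence theorem applied on the nodal domains to $|\nabla_g\psi|\nabla_g\psi$ together with $\|\psi\|_{H^2}\lesssim\lambda\|\psi\|_{L^2}$), so this is repaired by quoting it correctly rather than by your sketched argument; but as written, the step you identify as ``the main obstacle'' is supported by an unavailable ingredient.
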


%This result will follow from a quantum ergodicity property that holds for symbols supported in small balls of radius $|\log\lambda|^{-K}$ with $0<K<1/(2d)$ (See Proposition~\ref{p:QE}).

\textbf{Background on nodal sets}. For any smooth boundaryless compact connected Riemannian manifold of dimension $d$, Yau's conjecture states that there exist constants $c>0$ and $C>0$ independent of $\lambda$ such that
$$c \sqrt{\lambda} \leq \mathcal H ^{d-1} (Z_{\psi_\lambda}) \leq C \sqrt{\lambda}.$$
The conjecture was proved by Donnelly and Fefferman \cite{DF} in the real analytic case. In dimension $2$ and in the $C^\infty$ case, the best bounds are
$$c\sqrt{\lambda} \leq \mathcal H ^{1} (Z_{\psi_\lambda}) \leq C \lambda ^{3/4}.$$
The lower bound was proved by Br\"uning \cite{B} and Yau. The upper bound for $d=2$ was proved by Donnelly-Fefferman \cite{DF2} and Dong \cite{D}. For $d \geq 3$ the existing estimates are very far from the conjecture. The best lower bound is
$$c \lambda^{(3-d)/4} \leq \mathcal H ^{d-1} (Z_{\psi_\lambda}), $$ which was first proved by \cite{CM}, and later by \cite{HS12} and \cite{SZ12}. In \cite{HS12} the following slightly better estimate was proved: 
\begin{equation}\label{e:HeSo} c \sqrt {\lambda} ||\psi_\lambda||_{L^1}^2 \leq \mathcal H ^{d-1} (Z_{\psi_\lambda}),\end{equation} 
for any $L^2$-normalized eigenfunction $\psi_\lambda$. As was already mentionned, Blair and Sogge obtained improvements on the $L^p$ norms of eigenfunctions for nonpositively curved manifolds and quantum ergodic sequences of eigenfunctions. In~\cite{BlSo13}, they recall how one can deduce from these results improved lower bounds for the $L^1$ norm of eigenfunctions and (thanks to~\eqref{e:HeSo}) improvements on the results of~\cite{CM, HS12, SZ12}. It is conjectured in \cite{SZ11} that for negatively curved manifolds the $L^1$ norm is bounded below by a uniform constant. In \cite{Ze13}, this conjecture is proved under the assumption of the so called ``$L^\infty$ quantum ergodicity'', which is stronger than the standard ``$C^0$ quantum ergodicity'' and, to our knowledge, it has not been proved or disproved in any ergodic cases. In the same article it is in fact conjectured that $L^\infty$ quantum ergodicity holds on negatively curved manifolds. 
Our logarithmic improvement on the lower bounds of the nodal sets in the case of negatively curved manifolds is far from the above conjecture but
it uses again quantum ergodicity in a stronger sense than usual. Precisely, it uses the fact that the eigenfunctions are still equidistributed on balls
of shrinking radius $\eps\sim|\log\lambda|^{-K}$, with $0<K<1/(2d)$. Finally, we emphasize that, compared with the above results, our result is valid for the size of the nodal sets on any open set $U$ inside $M$.

\subsection{Outline of the proof} 
The idea of the proof of Theorem~\ref{theorem} is as follows. We cover $M$ with geodesic balls $B$ of radius $\eps=(\log \lambda)^{-K}$. We then fix such a ball $B$ and use our refined version of quantum ergodicity to obtain a full density subsequence of eigenfunctions which take $L^2$ mass on $B$ comparable to vol$(B)$. Then, we apply the method of \cite{CM} and obtain a lower bound for the size of the nodal set in this shrinking ball. This latter property requires to control the $L^p$ norm of the eigenfunctions inside these shrinking ballls. For that purpose, we rescale our problem and make use of semiclassical $L^p$ estimates for quasimodes~\cite{So88, KTZ07, Zw12}. As a corollary of this intermediary step, we also get the proof of Theorem~\ref{theorem2}. Finally, we conclude the proof of Theorem~\ref{theorem} by adding up over all balls in the covering. We observe that, since the number of balls is $\eps^{-d}$ one has to be careful in choosing a full density subsequence that works uniformly for all balls in the covering. 

%Finally, in section~\ref{s:torus}, we follow a similar strategy, and we prove a quantum ergodicity property on $\IT^d$ that holds for symbols carried in geodesic balls of size $\lambda^{\nu}$ with $\nu>0$ small enough. Then, mimicking the arguments of section~\ref{s:proof}, we obtain theorem~\ref{theorem3}. 

\begin{rema}
 After the initial posting of this article, Sogge in ~\cite{So15} was able to improve our arguments in paragraph~\ref{ss:Lp} in order to obtain localized $L^p$ estimates. These estimates establish a clear relation between $L^p$ bounds of eigenfunctions and estimating $L^2$ mass of eigenfunctions in balls with shrinking radius. We refer to his note for a precise statement.
\end{rema}

The article is organized as follows. In Section~\ref{s:QE}, we prove our refined version of quantum ergodicity on negatively curved manifolds, and then, in Section~\ref{s:proof}, we apply these results to the study of $L^p$ norms and nodal sets. In Appendix~\ref{a:moments}, we give the proof of a dynamical result on the rate of convergence of Birkhoff averages with respect to the $L^p$ norm.

%Finally, our Appendix~\ref{a:torus} is devoted to the proof of the small scale quantum ergodicity in the case of the rational torus. We also derive a polynomial rate of equidistribution of eigenfunctions in this geometric setting -- see Theorem~\ref{p:rate-QE-torus}.

%In section~\ref{s:torus}, we will give the proof of theorem~\ref{theorem3} mimicking the arguments of section~\ref{s:proof}.
%The proof of Theorem 2 follows pretty much the standard proof of QE (which was first given in \cite{Ze96}) except since our symbols depend on $\hbar$, in order to prove our theorem we need a precise information on the rate of classical ergodicity (and as a result, rate of QE). But thanks to a theorem of Liverani \cite{Li04}, we know that on a negatively curved manifold, the rate of mixing of the geodesic flow is exponential with precise constants in terms of the H\"older norm of the symbols. We use this to prove an estimate for the rate of classical ergodicity in terms of the H\"older norm of the symbol. This is the key step to get "uniform QE" on small balls. 

%Theorem 3 is proved using a rescaling argument on balls of radius $\eps$, and then an application of a Theorem of Koch, Tataru, and Zworski \cite{KTZ07}, where a semiclassical/microlocal version of Sogge $L^p$ estimates are proved. (see also \cite{SmZw13}). 

\begin{rema}
Since we are going to use semiclassical techniques we prefer to use the semiclassical parameter $\hbar$  and we will use this for the rest of this paper. For readers who are not familiar with the semiclassical parameter $\hbar$, one can think of $\hbar$ as $1/\sqrt{\lambda}$. Hence the high energy regime $\lambda \to \infty$ is equivalent to the semiclassical limit $\hbar \to 0$. 
\end{rema}
%Finally, although we will not mention it at every step of our proof, we will always assume that $M$ is a smooth, connected, compact Riemannian manifold without boundary and \textbf{with negative sectional curvature}.

\section{Quantum ergodicity on small scale balls}
\label{s:QE}

In all of this section, $M$ is a smooth, connected, compact Riemannian manifold without boundary and \textbf{with negative sectional curvature}.

Let $\alpha>0$ be some fixed positive number. For every $0<\hbar\leq 1$, we consider an orthonormal basis (made of eigenfunctions of $-\hbar^2\Delta_g$) $(\psi^{j}_{\hbar})_{j=1,\ldots, N(\hbar)}$ of the subspace 
$$\ml{H}_{\hbar}:=\mathbf{1}_{[1-\alpha\hbar,1+\alpha\hbar]}(-\hbar^2\Delta_g)L^2(M).$$
According to the Weyl's law for negatively curved manifolds (see \cite{DuGu75, B77}), one has $N(\hbar)\sim  \alpha A_M\hbar^{1-d}$ for some constant $A_M$ depending only on $(M,g)$. For each $1\leq j\leq N(\hbar)$, we denote $E_j(\hbar) \in [1-\alpha\hbar,1+\alpha\hbar]$ to be the eigenvalue corresponding to $\psi_j^\hbar$:
$$-\hbar^2\Delta\psi_{\hbar}^j=E_j(\hbar)\psi_{\hbar}^j.$$

Let $0\leq \chi\leq $ be a smooth cutoff function which is equal to $1$ on $[-1,1]$ and to $0$ outside $[-2,2]$. For a given $x_0$ in $M$ and a $0<\eps<\frac{\text{inj}(M,g)}{10}$, we define
$$\chi_{x_0,\eps}(x)= \chi\left(\frac{\|\exp_{x_0}^{-1}(x)\|_{x_0}}{\epsilon}\right),$$ 
where $\exp_{x_0}$ is the exponential map associated to the metric $g$, and $\|v\|_{x_0}^2=g_{x_0}(v,v)$. By construction, this function is compactly supported in $B(x_0,2\eps)$. The purpose of this section is to describe the rate of convergence of the following quantity:
 $$V_{\hbar,p}(x_0,\eps):=\frac{1}{N(\hbar)}\sum_{j=1}^{N(\hbar)}\left|\int_{M}\chi_{x_0,\eps}|\psi_{\hbar}^j|^2dv_g-\int_{M}\chi_{x_0,\eps}dv_g\right|^{2p},$$
where $p$ is a fixed positive integer and $v_g$ is the normalized volume measure on $M$. Observe that there exists uniform (in $x_0$ and in $\eps$) constants $0<\tilde{c}_1<\tilde{c}_2$ such that
\begin{equation}\label{e:comparison}\tilde{c}_1\eps^{d}\leq\int_{M}\chi_{x_0,\eps}dv_g\leq \tilde{c}_2\eps^d.\end{equation}
It follows from~(\ref{e:comparison}) that to obtain some nontrivial rate we need to show that $V_{\hbar,p}(x_0,\eps)$ is at least $o(\eps^{2dp})$. We will show that this is in fact possible for a particular choice of $\eps$ as a function of $\hbar$. 

Our main result on the rate of convergence of $V_{\hbar,p}(x,\eps)$ to zero is the following:
\begin{prop}\label{p:QE} Let $M$ be a smooth, connected, compact Riemannian manifold without boundary and with negative sectional curvature. Let $0<K<\frac{1}{2d}$, let $m>0$ and $\beta>0$.\\
Then, there exists $C>0$ and $\hbar_0>0$ such that the following holds, for every $0<\hbar\leq\hbar_0$:\\
Given an orthonormal basis $(\psi^{j}_{\hbar})_{j=1,\ldots, N(\hbar)}$ of the subspace $\ml{H}_{\hbar}$ made of eigenfunctions of $-\hbar^2\Delta_g$, one has, for every $x_0\in M$,
$$V_{\hbar,p}(x_0,m|\log\hbar|^{-K})\leq \frac{C}{|\log\hbar|^{p(1-2K\beta)}}.$$

\end{prop}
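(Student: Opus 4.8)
The plan is to follow the classical quantum ergodicity strategy of Shnirelman--Zelditch--Colin de Verdi\`ere, but run the Egorov argument up to the Ehrenfest time so that the shrinking scale $\eps\sim m|\log\hbar|^{-K}$ can be absorbed. First I would reduce $V_{\hbar,p}(x_0,\eps)$ to a spectral average of the diagonal matrix elements of a pseudodifferential operator. Writing $a=\chi_{x_0,\eps}$ (a multiplication operator, i.e.\ a symbol depending only on $x$), one has
$$\int_M \chi_{x_0,\eps}|\psi_\hbar^j|^2\,dv_g - \int_M \chi_{x_0,\eps}\,dv_g = \langle \Oph(a-\bar a)\psi_\hbar^j,\psi_\hbar^j\rangle,$$
where $\bar a = \int_M a\,dv_g$ is the phase-space average (the Liouville average of $a$ over $S^*M$ equals its $M$-average since $a$ is $x$-only). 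So $V_{\hbar,p}$ is exactly $\frac{1}{N(\hbar)}\sum_j |\langle \Oph(a-\bar a)\psi_\hbar^j,\psi_\hbar^j\rangle|^{2p}$.

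The heart of the argument is the Egorov theorem up to the Ehrenfest time. Let $G_t = e^{it\hbar\Delta_g/2}$ be the (half-wave / Schr\"odinger) propagator and $\phi^t$ the geodesic flow on $S^*M$. For negatively curved $M$ with maximal expansion rate $\Lambda_{\max}$, the semiclassical approximation $G_{-t}\Oph(a)G_t \approx \Oph(a\circ\phi^t)$ holds with controlled error for $|t|\le \kappa|\log\hbar|$ with $\kappa<\frac{1}{2\Lambda_{\max}}$ — and crucially this remains valid (with symbol classes $S_\rho$, $\rho<1$, or via the Fermi-Walker / Weyl-quantization bounds of Bouzouina--Robert, or the local parametrix of Zelditch \cite{Ze94} and Schubert \cite{Sch06}) even when the initial symbol is $\hbar$-dependent and supported on a ball of radius $|\log\hbar|^{-K}$, provided $K$ is small; this is where the restriction $K<\frac{1}{2d}$ enters, since one must control how the symbol $a\circ\phi^t$ and its derivatives behave under the flow while keeping the remainder $o$ of the main term. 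I would then form the Birkhoff time average $\langle a\rangle_T := \frac1T\int_0^T a\circ\phi^t\,dt$ with $T = T(\hbar) = \kappa|\log\hbar|$ (Ehrenfest time). Since each $\psi_\hbar^j$ is an eigenfunction, $\langle \Oph(a\circ\phi^t)\psi_\hbar^j,\psi_\hbar^j\rangle = \langle \Oph(a)\psi_\hbar^j,\psi_\hbar^j\rangle + o(1)$ for each $t$ in this window, so after averaging and using Egorov,
$$\langle \Oph(a-\bar a)\psi_\hbar^j,\psi_\hbar^j\rangle = \langle \Oph(\langle a\rangle_T - \bar a)\psi_\hbar^j,\psi_\hbar^j\rangle + O(\text{Egorov error}).$$
Now raise to the power $2p$, average over $j$, and apply the local Weyl law $\frac{1}{N(\hbar)}\sum_j \langle B\psi_\hbar^j,\psi_\hbar^j\rangle = \int_{S^*M} \sigma(B)\,dL + o(1)$ (which still holds with the stated uniformity for the operator $B=\Oph\big((\langle a\rangle_T-\bar a)^{2p}\big)$, itself built from powers of $\Oph$ of a $\log$-scale symbol — one needs the composition calculus to be stable here, again constraining $K$). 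This bounds $V_{\hbar,p}$ by $\int_{S^*M}|\langle a\rangle_T - \bar a|^{2p}\,dL$ up to the errors.

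The final ingredient is the quantitative ergodicity estimate proved in the Appendix (``a dynamical result on the rate of convergence of Birkhoff averages with respect to the $L^p$ norm''): for the geodesic flow on a negatively curved manifold, which is exponentially mixing, one has, for every $\beta>0$,
$$\int_{S^*M} |\langle a\rangle_T - \bar a|^{2p}\,dL \;\le\; C\,\frac{\|a\|_{\ast}^{2p}}{T^{p}} \qquad\text{(roughly; with an }\eps\text{-dependent constant)},$$
and one tracks the dependence on $\eps = m|\log\hbar|^{-K}$: the relevant norm of $a=\chi_{x_0,\eps}$ contributes a power of $\eps^{-1}$, which against $T^{-p}\sim |\log\hbar|^{-p}$ yields the claimed bound $C|\log\hbar|^{-p(1-2K\beta)}$ after choosing the mixing/regularity exponents appropriately (the parameter $\beta$ encodes the loss in the mixing rate or the H\"older exponent used, and can be taken arbitrarily small, which is why the proposition is stated for all $\beta>0$). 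The uniformity in $x_0$ comes from the fact that all constants in Egorov, the local Weyl law, and the mixing estimate depend only on finitely many derivatives of $\chi$ and on $(M,g)$, not on the center of the ball.

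The main obstacle, and the step requiring the most care, is establishing the Egorov approximation and the pseudodifferential calculus (composition, $L^2$-boundedness, local Weyl law) \emph{uniformly} for symbols that are simultaneously (i) $\hbar$-dependent, (ii) supported on balls of radius only $|\log\hbar|^{-K}$, so that each derivative costs a factor $|\log\hbar|^{K}$, and (iii) propagated for the full Ehrenfest time $\kappa|\log\hbar|$, during which the geodesic flow stretches the support exponentially and each further derivative of $a\circ\phi^t$ costs $e^{\Lambda_{\max}t}\sim \hbar^{-\kappa\Lambda_{\max}}$. Balancing these three sources of growth against the gain $T^{-p}\sim|\log\hbar|^{-p}$ from mixing is precisely what forces $0<K<\frac{1}{2d}$ (the exponent $1/(2d)$ reflecting that one needs $d$ derivatives to control symbols in dimension $d$ while the semiclassical remainder tolerates only a $\hbar^{-1/2}$-type loss split among them). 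Making this bookkeeping rigorous — ideally by working in an anisotropic symbol class adapted to the unstable/stable foliation, or by invoking the Ehrenfest-time results of \cite{Ze94, Sch06} and \cite{AnRi12} directly — is the technical core of the section.
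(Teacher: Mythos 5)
Your plan is essentially the paper's proof: reduce $V_{\hbar,p}$ to diagonal matrix elements of $\Oph$ applied to $\overline{\chi}_{x_0,\eps}:=\chi_{x_0,\eps}-\int_M\chi_{x_0,\eps}dv_g$ (with a spectral cutoff $\chi_1(-\hbar^2\Delta_g)$ inserted), time-average and apply Egorov up to $T(\hbar)=\kappa_0|\log\hbar|$, pass to the local Weyl law, and finish with the $2p$-moment Birkhoff estimate of the appendix, where the $\ml{C}^{\beta}$ norm of $\chi_{x_0,\eps}$ costs $\eps^{-\beta}$ and produces the exponent $p(1-2K\beta)$. Two points deserve correction. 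First, the passage from $\frac{1}{N(\hbar)}\sum_j\left|\la\psi_{\hbar}^j,\Oph(b)\psi_{\hbar}^j\ra\right|^{2p}$ (with $b=\theta\,\frac1T\int_0^T\overline{\chi}_{x_0,\eps}\circ G^tdt$) to a quantity to which the local Weyl law applies is left unexplained: one cannot simply "apply the Weyl law to $\Oph(b^{2p})$"; one needs an inequality comparing the $2p$-th power of the matrix element with the matrix element of $\Oph(|b|^{2p})$. The paper does this by choosing a \emph{positive} quantization with $\Oph(1)=\mathrm{Id}$, so that $c\mapsto\la\psi_{\hbar}^j,\Oph(c)\psi_{\hbar}^j\ra$ is a positive measure, and then applying Jensen's inequality; alternatively one can use Cauchy--Schwarz together with the composition calculus to compare $\Oph(b)^{2p}$ with $\Oph(b^{2p})$. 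Either route is standard, but some such argument must be supplied.

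Second, you misattribute the role of $K<\frac{1}{2d}$. Since $\eps=m|\log\hbar|^{-K}$ is only logarithmically small, each derivative of $\overline{\chi}_{x_0,\eps}$ costs $|\log\hbar|^{K}\ll\hbar^{-\nu}$ for every $\nu>0$, so the symbol lies in $S^{0,0}_{\nu}(T^*M)$ for every $0<\nu<\frac12$ \emph{regardless} of $K$; the Egorov theorem up to $\kappa_0|\log\hbar|$, the composition calculus and the local Weyl law then go through for any $K>0$, with no anisotropic symbol classes and no balancing of $d$ derivatives against a $\hbar^{-1/2}$ loss. The exponential stretching of the flow only constrains $\kappa_0$, which depends on $(M,g)$ alone. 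The restriction $K<\frac{1}{2d}$ plays no role in the proof of Proposition~\ref{p:QE}; it is imposed so that the resulting bound $|\log\hbar|^{-p(1-2K\beta)}$ is $o(\eps^{2dp})$, which is what the extraction of a density-one subsequence over the $\approx\eps^{-d}$ balls in paragraph~\ref{ss:density1} requires. In other words, what you describe as the technical core (and as the source of the exponent $\frac{1}{2d}$) is in fact the unproblematic part; the real content is the positivity/Jensen step above together with the quantitative $2p$-moment ergodic estimate of Proposition~\ref{p:moments} with explicit H\"older-norm dependence.
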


The assumption $0<K<1/(2d)$ implies that the upper bound is (at least for $\beta>0$ small enough) $o(\eps^{2dp})$ as expected. We point out that the main results of the recent preprint~\cite{Han14} give a small scale quantum ergodicity result (with $p=1$) that holds for more general classes of symbols.

\subsection{Classical ergodicity}

Before giving the proof of Proposition~\ref{p:QE}, we need some precise information on dynamics of the geodesic flow of negatively curved manifolds. Let $0<\beta<1$, and denote by $\ml{C}^{\beta}(S^*M)$ the space of $\beta$-H\"older functions on $S^*M$. Then, one has the following property of exponential decay of correlations~\cite{Li04}:
\begin{theo}\label{t:liverani2} Let $M$ be a smooth, connected, compact Riemannian manifold without boundary and with negative sectional curvature. Let $0<\beta<1$.\\ 
Then, there exists some constants $C_{\beta}>0$ and $\sigma_{\beta}>0$ such that, for every $a$ and $b$ in $\ml{C}^{\beta}(S^*M)$, and for every $t$ in $\IR$,
$$\left|\int_{S^*M}(a\circ G^t) bdL-\int_{S^*M}adL\int_{S^*M}bdL\right|\leq C_{\beta}e^{-\sigma_{\beta}|t|}\|a\|_{\ml{C}^{\beta}}
\|b\|_{\ml{C}^{\beta}},$$
where $L$ is the normalized Liouville measure on $S^*M$, $G^t$ is the geodesic flow, and $\|.\|_{\ml{C}^{\beta}}$ is the H\"older norm. 
\end{theo}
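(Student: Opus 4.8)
\medskip

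\noindent\textbf{Proof proposal.} The estimate is a quantitative mixing statement, and I would prove it by the transfer-operator method, following Liverani~\cite{Li04} (whose argument builds on Dolgopyat's oscillatory-cancellation idea); the essential point is that getting an \emph{exponential} rate forces one to exploit the \emph{contact} structure of the geodesic flow, not merely its Anosov property. First I would record the geometric input: the geodesic flow $G^t$ on $S^*M$ of a compact negatively curved manifold is a contact Anosov flow. There is a Hölder, $dG^t$-invariant splitting $TS^*M=\IR X\oplus E^s\oplus E^u$ with $X$ the generator of $G^t$, uniform exponential contraction of $E^s$ under $dG^t$ and of $E^u$ under $dG^{-t}$, and $E^s\oplus E^u=\ker\theta$ for the canonical contact form $\theta$, with $d\theta$ nondegenerate on $\ker\theta$; moreover $L$ is proportional to $\theta\wedge(d\theta)^{d-1}$ and is the unique smooth $G^t$-invariant probability measure. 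Then I would reformulate the conclusion spectrally: since $L$ is invariant, the Koopman semigroup $U^tf=f\circ G^t$ is unitary on $L^2(S^*M,dL)$ and has spectrum equal to $i\IR$, so no rate can be read off there; one instead looks for a spectral gap for $U^t$ (which coincides with the transfer operator here because the invariant measure is smooth) on a space of distributions adapted to the hyperbolicity.

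The plan is to introduce anisotropic Banach spaces $\mathcal{B}=\mathcal{B}^{r,s}\hookrightarrow\mathcal{D}'(S^*M)$ of distributions that are Hölder $\mathcal{C}^{r}$ along the flow and the unstable directions but only of distributional order $-s$ along the stable directions --- built either by flattening the weak-(un)stable foliations in charts, or microlocally à la Faure--Sjöstrand by cutoffs concentrating near the unstable conormal in $T^*(S^*M)$. On such spaces composition with $G^{-t}$ contracts the dangerous cotangent directions, which yields Lasota--Yorke (Doeblin--Fortet) inequalities bounding the strong norm of $U^tf$ by a strict contraction of the strong norm of $f$ (by the stable/unstable expansion rates) plus a relatively compact error measured in a weaker norm. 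Passing to the generator $X$ and the resolvent $R(z)=\int_0^\infty e^{-zt}U^t\,dt$, these inequalities show that $R(z)$, a priori holomorphic for $\Re z$ large, continues meromorphically to a half-plane $\{\Re z>-c_0\}$ with poles of finite rank (the Pollicott--Ruelle resonances), that $z=0$ is a simple pole whose residue is the rank-one projector $f\mapsto\int_{S^*M}f\,dL$ onto the constants (there being no other $G^t$-invariant element of $\mathcal{B}$, by ergodicity of $(G^t,L)$), and that $\|R(z)\|_{\mathcal{B}\to\mathcal{B}}$ is polynomially controlled away from the poles.

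The crux is to upgrade this to a genuine gap: to find $c>0$ such that $z=0$ is the only resonance in $\{-c\leq\Re z\leq0\}$, with $\|R(z)\|_{\mathcal{B}\to\mathcal{B}}\leq C\langle\Im z\rangle^{N}$ there. Since resonances are discrete one must (i) rule them out on $i\IR\setminus\{0\}$ and (ii) obtain a resonance-free strip for $|\Im z|$ large; both come from a Dolgopyat-type estimate. For $z=\sigma+ib$ with $|b|$ large, an iterated transfer operator is a sum over inverse branches carrying phases $e^{ib\tau_N}$ with $\tau_N$ a Birkhoff sum of the temporal (roof) function along the branch; the contact condition --- equivalently, uniform non-integrability of $E^s$ against $E^u$, made quantitative through the nondegeneracy of $d\theta$ (or of the temporal distance function of nearby stable/unstable leaves) --- forces the phases of distinct branches to separate at a definite rate across the $d-1$ transverse unstable directions, so that integrating over a piece of unstable manifold produces van der Corput / stationary-phase cancellation; iterating the resulting $|b|^{-\eta}$ gain over $\sim\log|b|$ steps beats the branch count and gives (i) and (ii).

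With the gap in hand I would conclude in the standard way. Write the correlation as an inverse Laplace transform,
$$\int_{S^*M}(a\circ G^t)\,b\,dL-\int_{S^*M}a\,dL\int_{S^*M}b\,dL=\frac{1}{2\pi i}\int_{\Re z=\sigma}e^{tz}\,\Big\langle R(z)\Big(a-\int_{S^*M}a\,dL\Big),\bar b\Big\rangle\,dz\qquad(t\geq0),$$
shift the contour to $\Re z=-c'$ with $0<c'<c$ --- no residue is picked up because $a-\int_{S^*M}a\,dL$ annihilates the simple pole at $0$ --- and bound the shifted integral by $Ce^{-c't}$ times the relevant Banach norms using the polynomial resolvent bounds (a Paley--Wiener/Tauberian argument). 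Since $\mathcal{C}^{\beta}(S^*M)$ embeds continuously into $\mathcal{B}^{r,s}$ and, via $b\mapsto\langle\cdot,\bar b\rangle$, into its dual once $r$ and $s$ are taken $\leq\beta$ (a fixed multiple of $\beta$ is enough), this yields $\leq C_\beta e^{-\sigma_\beta|t|}\|a\|_{\mathcal{C}^\beta}\|b\|_{\mathcal{C}^\beta}$ for $t\geq0$ with $\sigma_\beta=c'$, and the case $t\leq0$ follows by running the same argument for the (again contact Anosov) reversed flow $G^{-t}$ or by symmetry of the Liouville measure. The main obstacle is clearly steps (i)--(ii): extracting an exponential, rather than merely qualitative, rate is exactly the Dolgopyat oscillatory-cancellation mechanism, which uses the contact/non-integrability structure in an essential way --- it is what can fail for general Anosov flows, whose mixing may be arbitrarily slow --- and is by far the heaviest part of the argument; constructing anisotropic spaces on which the generator has a meromorphic resolvent with only finitely many resonances in a strip is the other substantial (though by now fairly standard) ingredient.
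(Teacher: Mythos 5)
Note first that the paper does not prove this statement at all: Theorem~\ref{t:liverani2} is quoted as a known result of Liverani~\cite{Li04} (``On contact Anosov flows''), and only its consequences (via the norms and the estimate~\eqref{e:spectral-gap} recalled in Appendix~\ref{a:moments}) are used. Your outline is essentially the strategy of that reference: anisotropic Banach spaces adapted to the stable/unstable splitting, Lasota--Yorke inequalities, and a Dolgopyat-type oscillatory-cancellation estimate exploiting the contact structure to produce the spectral gap, so in that sense you are reconstructing the proof the paper relies on rather than proposing a new one. The one difference is packaging: you phrase the gap through meromorphic continuation of the resolvent of the generator, Ruelle resonances and a contour shift (the later microlocal formulation of Faure--Sj\"ostrand, Tsujii, Dyatlov--Zworski), whereas Liverani works directly with the semigroup $\ml{L}_t$ on his norms and proves the exponential bound~\eqref{e:spectral-gap} outright; both routes hinge on the same contact/non-integrability mechanism, and your sketch correctly identifies that this Dolgopyat step is where all the real work lies.
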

This result on the rate of mixing has implications on the rate of classical ergodicity. More precisely, given $a$ in $\ml{C}^{\beta}(S^*M,\IR)$ and $T>0$, one can define, for every positive integer $p$, 
$$V_{p}(a,T):=\int_{S^*M}\left|\frac{1}{T}\int_0^Ta\circ G^tdt-\int_{S^*M}adL\right|^{2p}dL.$$
Using Theorem~\ref{t:liverani2}, one can easily show that $V_{1}(a,T)\leq \frac{C_{\beta,1}\|a\|_{\ml{C}^{\beta}}^{2}}{T}.$ Moreover, we can also control
the moments of order $2p$ but obtaining such estimates requires more work:
\begin{prop}\label{p:moments} Let $M$ be a smooth, connected, compact Riemannian manifold without boundary and with negative sectional curvature. Let $0<\beta<1$, and 
let $p$ be a positive integer. Then, there exists some constant $C_{\beta,p}>0$ such that, for all symbols $a$ in $\ml{C}^{\beta}(S^*M)$, and for every $T>0$,
 $$V_{p}(a,T):=\int_{S^*M}\left|\frac{1}{T}\int_0^Ta\circ G^tdt-\int_{S^*M}adL\right|^{2p}dL\leq \frac{C_{\beta,p}\|a\|_{\ml{C}^{\beta}}^{2p}}{T^p}.$$
\end{prop}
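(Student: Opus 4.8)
The plan is first to replace $a$ by $a-\int_{S^*M}a\,dL$ (which changes $\|a\|_{\ml{C}^{\beta}}$ only by a bounded factor), so that we may assume $\int_{S^*M}a\,dL=0$; and since $V_p(a,T)\le\|a\|_\infty^{2p}\le\|a\|_{\ml{C}^{\beta}}^{2p}$ trivially, we may assume $T\ge1$. Expanding the $2p$-th power, interchanging integrals, symmetrizing over the ordered simplex $\{0\le t_1\le\dots\le t_{2p}\le T\}$ (a factor $(2p)!$), and using the $G^t$-invariance of $L$ to pass to gap coordinates $u_i=t_{i+1}-t_i$, one sees the resulting integrand depends only on $u=(u_1,\dots,u_{2p-1})$; writing $\phi(u):=\int_{S^*M}\prod_{i=1}^{2p}\big(a\circ G^{\tau_i}\big)\,dL$ with $\tau_1=0$, $\tau_i=u_1+\dots+u_{i-1}$, and integrating out the base point $t_1$, the statement reduces to
\begin{equation*}
\int_{\{u_i\ge0,\ \sum_i u_i\le T\}}|\phi(u)|\,du\ \le\ C_{\beta,p}\,\|a\|_{\ml{C}^{\beta}}^{2p}\,T^{p-1}.
\end{equation*}

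\textbf{A Wick-type correlation bound and the induction.} The core is the pointwise estimate: there exist $\sigma'>0$ (depending on $\beta,p$) and $C_{\beta,p}>0$ with, for ordered times $\tau_1\le\dots\le\tau_{2p}$,
\begin{equation*}
|\phi(u)|\ \le\ C_{\beta,p}\,\|a\|_{\ml{C}^{\beta}}^{2p}\sum_{P}\ \prod_{\{i,j\}\in P}e^{-\sigma'|\tau_i-\tau_j|},
\end{equation*}
the sum running over all pairings $P$ of $\{1,\dots,2p\}$ into $p$ pairs — the bound one expects if $(a\circ G^t)_t$ behaved like a Gaussian field with covariance $O(e^{-\sigma'|t-s|})$. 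I would prove this by induction on the number of factors (with a companion statement for an odd number of factors, $P$ then leaving one index unmatched): pick the largest gap $u_k$, factor $\prod_i (a\circ G^{\tau_i})=F\cdot H$ with $F=\prod_{i\le k}(a\circ G^{\tau_i})$, $H=\prod_{i>k}(a\circ G^{\tau_i})$, translate so that $F$ becomes $F^\ast$ supported in time on $(-\infty,0]$ and $H$ becomes $\widetilde H$ supported on $[0,\infty)$, whence $\phi(u)=\int F^\ast\,(\widetilde H\circ G^{u_k})\,dL$; decorrelate across the gap $u_k$,
\begin{equation*}
|\phi(u)|\ \le\ \Big|\int_{S^*M}F^\ast\,dL\Big|\,\Big|\int_{S^*M}\widetilde H\,dL\Big|\ +\ C_p\,e^{-\sigma_\beta u_k}\,\|a\|_{\ml{C}^{\beta}}^{2p},
\end{equation*}
and recurse on the two mean factors. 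A singleton factor has mean $0$, which kills the leading term; and since $u_k$ is the largest gap one has $u_k\gtrsim|\tau_i-\tau_j|/(2p)$ for every pair it straddles, so $e^{-\sigma_\beta u_k}$ is absorbed — at the cost of a smaller rate $\sigma'$ — into $\prod_{\{i,j\}}e^{-\sigma'|\tau_i-\tau_j|}$ over the matchings produced by the recursion.

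\textbf{Finishing the integration.} Granting the Wick-type bound, write $|\tau_i-\tau_j|=\sum_{\min(i,j)\le l<\max(i,j)}u_l$, so each summand is $e^{-\sigma'\sum_l c_l(P)\,u_l}$ with $c_l(P)$ the number of pairs of $P$ straddling the $l$-th gap; $c_l(P)\ge1$ unless the $l$-th gap is one of the (exactly $k-1$) points where $P$ splits into its $k$ irreducible blocks, and $k\le p$. Hence $\int_{\{u_i\ge0,\ \sum u_i\le T\}}e^{-\sigma'\sum_l c_l(P)u_l}\,du\le (\sigma')^{-(2p-k)}\,T^{k-1}/(k-1)!\le C_{\beta,p}\,T^{p-1}$ for $T\ge1$; summing over the $(2p-1)!!$ pairings and using the reduction above gives $V_p(a,T)\le C_{\beta,p}\,\|a\|_{\ml{C}^{\beta}}^{2p}\,T^{-p}$.

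\textbf{The main obstacle.} It is the decorrelation step in the induction. Applying Theorem~\ref{t:liverani2} verbatim produces the error term $C\,e^{-\sigma_\beta u_k}\,\|F^\ast\|_{\ml{C}^{\beta}}\,\|\widetilde H\|_{\ml{C}^{\beta}}$, but $\|F^\ast\|_{\ml{C}^{\beta}}$ and $\|\widetilde H\|_{\ml{C}^{\beta}}$ grow like $e^{\beta\Lambda(\text{time-spread})}$, where $\Lambda$ is an expansion rate of the geodesic flow; for $\beta$ not small compared with $1/p$ this dominates $e^{-\sigma_\beta u_k}$, and the Proposition is claimed for every $\beta\in(0,1)$. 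Overcoming this requires the hyperbolic structure behind Theorem~\ref{t:liverani2}: $F^\ast$ is a product of \emph{backward}-in-time translates of $a$, hence $\beta$-H\"older along the unstable foliation with norm $\le C_p\|a\|_{\ml{C}^{\beta}}^{2p}$ \emph{uniformly in its time-spread} (composition with $G^{-t}$, $t\ge0$, contracts the unstable direction), and symmetrically $\widetilde H$ is uniformly H\"older along the stable foliation; feeding this into an anisotropic refinement of the decay of correlations — or, by hand, mollifying $F^\ast$ along the stable and $\widetilde H$ along the unstable leaves at a scale $\sim e^{-c\,u_k}$ and balancing the mollification error against $e^{-\sigma_\beta u_k}$ — yields the spread-free bound $C_p\,e^{-\sigma_\beta u_k}\,\|a\|_{\ml{C}^{\beta}}^{2p}$ used above. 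This Ehrenfest-type bookkeeping, together with the combinatorics ensuring decay in sufficiently many gap variables at once, is precisely what is carried out in Appendix~\ref{a:moments}.
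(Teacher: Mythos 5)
The analytic heart of your plan --- that the backward product $F^\ast$ is $\beta$-H\"older along the unstable direction and the forward product $\widetilde H$ along the stable direction, \emph{uniformly in their time spread}, and that this must be fed into an anisotropic decay-of-correlations estimate together with a mollification trading $\ml{C}^{\beta}$ for $\ml{C}^3$ --- is sound, and it is essentially what the Appendix does with Liverani's norms. The gap is the combinatorial core: your Wick-type pointwise bound, a sum over \emph{perfect pairings}, is false for non-Gaussian observables. Take $p=3$ and let the six times form two tight clusters, $\tau_1=\tau_2=\tau_3=0$ and $\tau_4=\tau_5=\tau_6=S$. By mixing,
$$\int_{S^*M} a^3\,\big(a^3\circ G^{S}\big)\,dL\ \longrightarrow\ \Big(\int_{S^*M}a^3\,dL\Big)^2\qquad (S\to+\infty),$$
which is nonzero for generic mean-zero $a$, whereas every perfect pairing of six indices must contain a pair straddling the central gap, so your bound would force decay like $e^{-\sigma' S}$. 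The same phenomenon is exactly where your induction breaks: when the largest gap splits the indices into two odd blocks of size $\geq 3$ (first possible at $2p=6$), the leading term $\int F^\ast dL\cdot\int\widetilde H\,dL$ need not be small and carries no decay in the straddled gap, so the recursion only produces a matching missing two indices; such deficient matchings damp too few gap variables, and your final integration (which needs all but at most $p-1$ block-splitting gaps to be damped) no longer yields $T^{p-1}$. Correlations of mean-zero non-Gaussian observables are governed by partitions into blocks of size at least two (cumulants), not by pairings.

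What is both true and sufficient is weaker: exponential decay holds as soon as \emph{one single} time $t_{j_0}$ is at distance $\geq\tau$ from all the others (the isolated factor eventually meets a zero-mean singleton in the recursion, killing the product of means); this is Lemma~\ref{l:multicorrelation}, proved with precisely the anisotropic machinery you describe. One then gives up pointwise decay on the complementary set and instead bounds its measure, following Ratner: if every time has another within distance $n$, the tuple splits into clusters of size $\geq 2$, hence at most $p$ clusters, hence the set has Lebesgue measure $O(n^{2p}T^{p})$ in $[0,T]^{2p}$; summing $n^{2p}e^{-\sigma(n-1)}$ over $n$ gives the unnormalized bound $CT^{p}$, i.e.\ $V_p(a,T)\leq C_{\beta,p}\|a\|_{\ml{C}^{\beta}}^{2p}T^{-p}$. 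If you wish to keep your scheme, replace the pairing sum by this isolation/cluster dichotomy; the bound ``at most $p$ clusters'' then plays the role your ``at most $p$ irreducible blocks'' was meant to play.
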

A slightly weaker version of this inequality can be obtained as a corollary of the central limit theorem proved in~\cite{Ze94, MeTo12}. In both references, the proof was based on results due to Ratner~\cite{Ra73}, and the dependence of the constant in terms of H\"older norms was not very explicit. In Appendix~\ref{a:moments}, we will use a slightly different approach which does not require the use of Markov partitions and symbolic dynamics, and which gives the above explicit constants in terms of the H\"orlder norm of the symbol. More precisely, we will use the machinery developed (for instance) by Liverani in~\cite{Li04} in order to study the spectral properties of the transfer operator. We emphasize that, unlike the case $p=1$, the rate of convergence of higher moments cannot be directly deduced from Theorem~\ref{t:liverani2}, and that we really need to use more precise results from~\cite{Li04} in order to get Proposition~\ref{p:moments}.

In the case where we pick the function $\chi_{x,\eps}$ defined above, we get
\begin{equation}\label{e:liverani2}V_p(\chi_{x,\eps},T):=\int_{S^*M}\left|\frac{1}{T}\int_0^T\chi_{x,\eps}\circ G^tdt-\int_{M}\chi_{x,\eps}dv_g\right|^{2p}dL=\ml{O}\left(\frac{1}{\eps^{2p\beta}T^p}\right),\end{equation}
where the constant in the remainder can be chosen uniformly in terms of $x$, $\eps$ and $T$. We observe that this is valid for any $\beta>0$ (of course the constant depends on $\beta$).

\subsection{Proof of Proposition~\ref{p:QE}}
\label{ss:proof-QE}
We will now implement these dynamical properties in the ``standard'' proof of quantum ergodicity~\cite{Sh74, Ze87, CdV85, HeMaRo87, Ze96, Zw12}. As in~\cite{Ze94, Sch06}, we will make use of the semiclassical approximation up to times of order $|\log\hbar|$.

Let $x$ be a point in $M$, let $0<K<\frac{1}{2d}$ and let $m>0$. We define 
$$\eps=\eps_{\hbar}:=m|\log\hbar|^{-K}.$$
We set $\overline{\chi}_{x,\eps}:=\chi_{x,\eps}-\int_{M}\chi_{x,\eps}dv_g$. In particular, for every $0<\nu<\frac{1}{2}$, $\overline{\chi}_{x,\eps}$ belongs to the ``nice'' class of symbols $S^{0,0}_{\nu}(T^*M)$ as defined in~\cite{Zw12} for instance. This class of symbols is suitable to pseudodifferential calculus (composition laws, Weyl's law, Egorov's theorem, etc.). 
\begin{rema}\label{r:cutoff}
We introduce $0\leq\chi_1\leq 1$ a smooth cutoff function which is identically $1$ in a small neighborhood of $1$, and which vanishes outside a slightly bigger neighborhood. We observe that, for $\hbar>0$ small enough, one has $\chi_1(-\hbar^2\Delta_g)\psi_{\hbar}^j=\psi_{\hbar}^j$ for every $1\leq j\leq N(\hbar)$. We also recall that $\chi_1(-\hbar^2\Delta_g)$ is a $\hbar$-pseudodifferential operator in $\Psi^{-\infty, 0}(M)$ with principal symbol $\theta(x,\xi):=\chi_1((\|\xi\|_x^*)^2)$~\cite{Zw12} (chapter $14$). We have denoted by $\|.\|_x^*$ the induced metric on $T_x^*M$.
\end{rema}
We then get
$$V_{\hbar,p}(x,\eps)=\frac{1}{N(\hbar)}\sum_{j=1}^{N(\hbar)}\left|\left\la\psi_{\hbar}^j,\Oph(\theta\overline{\chi}_{x,\eps})\psi_{\hbar}^j\right\ra\right|^{2p}+\ml{O}(\hbar),$$
where $\Oph$ is a fixed quantization procedure~\cite{Zw12}. Without loss of generality, we can suppose that $\Oph(1)=\text{Id}_{L^2(M)}$ and that it is a positive quantization procedure~\cite{Ze87, CdV85, HeMaRo87, Ze94}, i.e. $\Oph(a)\geq 0$ if $a\geq 0$. We now let 
$$T(\hbar)=\kappa_0|\log\hbar|,$$ 
with $\kappa_0>0$. Since $\psi_{\hbar}^j$ are $\Delta_g$-eigenfunctions, we can replace $\left\la\psi_{\hbar}^j,\Oph(\theta\overline{\chi}_{x,\eps})\psi_{\hbar}^j\right\ra$ by
$$\frac{1}{T(\hbar)}\int_0^{T(\hbar)}\left\la\psi_{\hbar}^j,e^{-it\hbar\Delta_g/2}\Oph(\theta\overline{\chi}_{x,\eps})e^{it\hbar\Delta_g/2}\psi_{\hbar}^j\right\ra dt.$$
We now choose and fix $\kappa_0$ small enough (only depending on $(M,g)$) and apply the Egorov theorem up to $T(\hbar)$(see \cite{Zw12}, section $11.4$) to find that
$$V_{\hbar,p}(x,\eps)=\frac{1}{N(\hbar)}\sum_{j=1}^{N(\hbar)}\left|\left\la\psi_{\hbar}^j,\Oph\left(\theta\frac{1}{T}\int_0^T\overline{\chi}_{x,\eps}\circ G^tdt\right)\psi_{\hbar}^j\right\ra\right|^{2p}+\ml{O}(\hbar^{\nu_0}),$$
for some fixed $\nu_0>0$ (depending on $\kappa_0>0$). Moreover, still thanks to the Egorov theorem, the principal symbol $\frac{1}{T}\int_0^T\overline{\chi}_{x,\eps}\circ G^tdt$ belongs to $S^{0,0}_{\nu}(T^*M)$ for some $0<\nu<1/2$. Because $\Oph$ is a positive quantization procedure, the distribution 
$$\mu_{\hbar}^j:b\in\mathcal{C}^{\infty}_c(T^*M)\mapsto\left\la\psi_{\hbar}^j,\Oph\left(b\right)\psi_{\hbar}^j\right\ra$$
is positive on $T^*M$, and in fact it is a positive measure on $T^*M$. Then by applying the Jensen's inequality (recall that $p\geq 1$) we get
$$V_{\hbar,p}(x,\eps)=\frac{1}{N(\hbar)}\sum_{j=1}^{N(\hbar)}\left\la\psi_{\hbar}^j,\Oph\left(\left|\theta\frac{1}{T}\int_0^T\overline{\chi}_{x,\eps}\circ G^tdt\right|^{2p}\right)\psi_{\hbar}^j\right\ra+\ml{O}(\hbar^{\nu_0}).$$
We now apply the local Weyl law  (see for instance Proposition $1$ in~\cite{Sch06} for a proof of this fact in our context). We get
$$V_{\hbar,p}(x,\eps)\leq C_0\int_{T^*M}\left( \theta \frac{1}{T}\int_0^T\overline{\chi}_{x,\eps}\circ G^tdt\right)^{2p}dL+\ml{O}(\hbar^{\nu_0}),$$
where $C_0$ is independent of $\hbar$ and $x$ in $M$. The parameter $\nu_0>0$ can become smaller from line to line but it remains positive. We underline that the different constants do not depend on the choice of the ONB of $\ml{H}_{\hbar}$. 
\begin{rema}
We also emphasize that, up to this point, all the constants in the remainder are uniform for $x$ in $M$ (by construction of the function $\chi_{x,\eps}$). Precisely, they only depend on a finite number derivatives of $\chi$, the manifold $(M,g)$, and the choice of the quantization procedure.
\end{rema}
We now apply property~\eqref{e:liverani2},  and we get that\footnote{Recall that $T=T(\hbar)=\kappa_0|\log\hbar|$.}
$$V_{\hbar,p}(x,\eps)\leq\ml{O}\left(\frac{1}{\eps^{2\beta p}|\log\hbar|^p}\right)+\ml{O}(\hbar^{\nu_0}),$$
which implies Proposition~\ref{p:QE}.

\section{Proof of the main results}
\label{s:proof}

Again, even if we do not mention it at every step, in this section $M$ will be a smooth, connected, compact Riemannian manifold without boundary and \textbf{with negative sectional curvature}.

In this section we give the proofs of our main results. The strategy is as follows. We start by covering the manifold with balls of radius $\eps=|\log\hbar|^{-K}$, and we apply our quantum ergodicity result from the previous section to extract a density $1$ subsequence ``adapted'' to this cover of $M$. We then improve the $L^p$ bounds along this subsequence of eigenfunctions by rescaling our problem and using the semiclassical $L^p$ estimates for quasimodes~\cite{So88, KTZ07, Zw12}. After that, we use the approach of Colding-Minicozzi ~\cite{CM} to prove our lower bound for the size of the nodal set on any open subset $U$.

\subsection{Covering $M$ with small balls}

First, we cover $M$ with balls $(B(x_k, \eps))_{k=1,\ldots, R(\eps)}$  of radius $\eps =|\log \hbar|^{-K}$, for a fixed $0<K<\frac{1}{2d}$. In addition, we require that the covering is chosen in such a way that each point in $M$ is contained in $C_g$-many of the double balls $B(x_k, 2\eps)$. The number $C_g$ can be chosen to be only dependent on $(M,g)$, and hence independent of $0<\eps\leq 1$. See for example Lemma $2$ in~\cite{CM} for a proof of this fact. We have
$$M \subset \bigcup_{k=1}^{R(\eps)} B(x_k, \eps).$$ The doubling property shows that there exists constants $c_1>0$ and $c_2>0$ , only dependent on $(M,g)$, such that
\begin{equation}\label{e:number-eps-ball} c_1 \eps^{-d} \leq R(\eps) \leq c_2 \eps^{-d}.\end{equation} 
%\begin{rema}
 %The lower bound is obvious. The upper bound follows from the fact that we require that any point of $M$ is contained in at most $C_g$ doubling balls. In fact, it implies that
%$$\sum_{k=1}^{\tilde{R}(\eps)}\text{Vol}_g(B(\tilde{x}_k,2\eps))\leq C_g\text{Vol}_g(M).$$ 
%\end{rema}

We note that in fact we have a family of coverings parametrized by $\eps$, and hence the centers $\{x_k\} $ also depend on $\eps$. Now let $1\leq k\leq R(\eps)$. As in section~\ref{s:QE}, we define 
$$\chi_{x_k,\eps}(x)= \chi\left(\frac{\|\exp_{{x}_k}^{-1}(x)\|_{{x}_k}}{\epsilon}\right),$$ 
where $\exp_{x_k}$ is the exponential map associated to the metric $g$. By construction, this function is compactly supported in $B({x}_k,2\eps)$.

\subsection{Extracting a full density subsequence}\label{ss:density1}
Following the proof of~\cite{Zw12}, we want to extract a subsequence of density $1$ of eigenfunctions for which quantum ergodicity holds for all symbols $\chi_{x_k, \eps}$ uniformly in $k$. This is a standard procedure, and we just have to pay attention to the fact that the number of balls in the cover is large.

Let $m=\frac{1}{2}$ or $m=50$. Let $1\leq k\leq R(\eps)$, $0<K<\frac{1}{2d}$, and $\beta>0$ such that $K<\frac{1}{2d+4\beta}$. We also fix a positive integer $p$ such that $dK+p(K(4\beta+2d)-1)<0$.\\
As before, we fix
$$\eps=\eps_{\hbar}:=|\log\hbar|^{-K}.$$
Combining the Bienaym\'e-Tchebychev inequality to Proposition~\ref{p:QE} and to inequality~\eqref{e:comparison}, the subset $J_{k,K,m}(\hbar)\subset\{1,\ldots, N(\hbar)\}$ defined as 
\begin{equation}\label{e:large-deviation}
J_{k,K,m}(\hbar):=\left\{j\in\{1,\ldots, N(\hbar)\}:\ \left|\frac{\int_{M}\chi_{x_k,m\eps}|\psi_{\hbar}^j|^2dv_g}{\int_{M}\chi_{x_k,m\eps}dv_g}-1\right|\geq\eps^{\beta}\right\},
\end{equation}
satisfies
\begin{equation}\label{e:density1}
\frac{\sharp J_{k,K,m}(\hbar)}{N(\hbar)}\leq \frac{C_0}{\eps^{(4\beta+2d)p}|\log\hbar|^p},
\end{equation}
where the positive constant $C_0$ depends only on $p$, on $\chi$, and on the manifold $(M,g)$. In particular, $C_0$ is uniform for $1\leq k\leq R(\eps)$. We then define $$\Lambda_{k,K,m}(\hbar):=\{1\leq j\leq N(\hbar):j\notin J_{k,K,m}(\hbar)\},$$ and
$$\Lambda_{K,m}(\hbar):=\bigcap_{k=1}^{R(\eps)}\Lambda_{k,K,m}(\hbar).$$
Thanks to~\eqref{e:number-eps-ball} and to~\eqref{e:density1}, we get
$$\frac{\sharp\Lambda_{K,m}(\hbar)}{N(\hbar)}\geq 1-\frac{1}{N(\hbar)}\sum_{k=1}^{R(\eps)}\sharp J_{k,K,m}(\hbar)\geq 1-\frac{C_0c_2}{\eps^{(4\beta+2d)p+d}|\log\hbar|^p}.$$
Hence using the definition of $\eps=\eps_{\hbar}$, we find that $\frac{\sharp\Lambda_{K,m}(\hbar)}{N(\hbar)}$ tends to $1$ as $\hbar$ goes to $0$. This in particular, using ~\eqref{e:large-deviation} and~\eqref{e:comparison}, shows that $\hbar>0$ small enough, for every $1\leq k\leq R(\eps)$, and for every $j\in \Lambda_{K,m}(\hbar)$,
$$a_1\eps^d\leq\int_{M}\chi_{x_k,m\eps}|\psi_{\hbar}^j|^2dv_g\leq a_2\eps^d,$$
where the constants $a_1,a_2>0$ depend only on $\chi$ and on the manifold $(M,g)$. By taking intersection $\Lambda_{K}(\hbar):=\Lambda_{K,1/2}(\hbar)\cap\Lambda_{K,50}(\hbar)$, we still have a full density subsequence. Hence we have proved the following crucial lemma
\begin{lemm}\label{l:density1} Suppose that $M$ is negatively curved. Let $0<K<\frac{1}{2d}$ and $\eps=|\log\hbar|^{-K}$. There exists $0<\hbar_0\leq 1/2$ such that, for every $0<\hbar\leq\hbar_0$, the following holds:\\
Given an orthonormal basis $(\psi_{\hbar}^j)_{j=1,\ldots N(\hbar)}$ of $\mathbf{1}_{[1-\alpha\hbar,1+\alpha\hbar]}(-\hbar^2\Delta_g)L^2(M)$ made of eigenfunctions of $-\hbar^2\Delta_g$, one can find a full density subset $\Lambda_K(\hbar)$ of $\{1,\ldots , N(\hbar)\}$ such that, for every $1\leq k\leq R(\eps)$, and for every $j\in \Lambda_K(\hbar)$, one has
\begin{equation}\label{e:vol-eps-ball}a_1\eps^d\leq\int_{B(x_k,\eps)}|\psi_{\hbar}^j|^2dv_g\leq\int_{B(x_k,50\eps)}|\psi_{\hbar}^j|^2dv_g\leq a_2\eps^d,
\end{equation}
where the constants $a_1,a_2>0$ depend only on $\chi$ and on the manifold $(M,g)$.
\end{lemm}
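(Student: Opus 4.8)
The plan is to follow the standard quantum ergodicity extraction scheme, exactly as sketched in the paragraph preceding the lemma, and to turn the quantitative estimate of Proposition~\ref{p:QE} into a full density subset that works simultaneously for every ball of the covering. First I would fix the covering $(B(x_k,\eps))_{k=1,\dots,R(\eps)}$ with $\eps=|\log\hbar|^{-K}$ from the previous paragraph, together with the bound $R(\eps)\leq c_2\eps^{-d}$ coming from the doubling property, and I would introduce the cutoffs $\chi_{x_k,50\eps}$. Then, for each individual $k$, I would invoke Proposition~\ref{p:QE} (applied to the symbol $\chi_{x_k,50\eps}$, i.e. with $m=50$) to control the $2p$-th moment $V_{\hbar,p}(x_k,50\eps)$, and combine this with the Bienaym\'e--Tchebychev inequality and the volume comparison~\eqref{e:comparison} to conclude that the ``bad'' index set $J_{k,K,50}(\hbar)$ of eigenfunctions whose mass on $B(x_k,50\eps)$ deviates from $\vol(B(x_k,50\eps))$ by more than a factor $\eps^\beta$ has relative cardinality $\le C_0\eps^{-(4\beta+2d)p}|\log\hbar|^{-p}$, with $C_0$ uniform in $k$ because all constants in Proposition~\ref{p:QE} are uniform in $x_0$.

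The key point where the ``large number of balls'' has to be handled carefully is the union bound. I would set $\Lambda_{K,50}(\hbar)=\bigcap_{k=1}^{R(\eps)}(\{1,\dots,N(\hbar)\}\setminus J_{k,K,50}(\hbar))$ and estimate
$$\frac{\sharp\Lambda_{K,50}(\hbar)}{N(\hbar)}\ \geq\ 1-\sum_{k=1}^{R(\eps)}\frac{\sharp J_{k,K,50}(\hbar)}{N(\hbar)}\ \geq\ 1-\frac{C_0c_2}{\eps^{(4\beta+2d)p+d}|\log\hbar|^{p}}.$$
Now the arithmetic of the exponents matters: with $\eps=|\log\hbar|^{-K}$ the right-hand-side error is of size $|\log\hbar|^{K((4\beta+2d)p+d)-p}=|\log\hbar|^{dK+p(K(4\beta+2d)-1)}$, so I need $dK+p(K(4\beta+2d)-1)<0$, which is exactly the condition imposed in the statement's set-up (and which is achievable precisely because $K<1/(2d)$ forces $K(4\beta+2d)-1<0$ for $\beta$ small, and then one takes $p$ large enough). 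This gives $\sharp\Lambda_{K,50}(\hbar)/N(\hbar)\to1$. I would then repeat the identical argument with $m=1/2$ to obtain $\Lambda_{K,1/2}(\hbar)$ controlling the mass on the \emph{smaller} balls $B(x_k,\eps)$ from below, and intersect: $\Lambda_K(\hbar):=\Lambda_{K,1/2}(\hbar)\cap\Lambda_{K,50}(\hbar)$ is still full density.

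Finally, for $j\in\Lambda_K(\hbar)$ I would unwind the definitions: membership in $\Lambda_{k,K,1/2}(\hbar)$ gives $\big|\int\chi_{x_k,\eps/2}|\psi_\hbar^j|^2dv_g/\int\chi_{x_k,\eps/2}dv_g-1\big|<\eps^\beta$, and since $\chi_{x_k,\eps/2}$ is supported in $B(x_k,\eps)$ and bounded by $1$, together with~\eqref{e:comparison} this yields $\int_{B(x_k,\eps)}|\psi_\hbar^j|^2dv_g\ge\tilde c_1(\eps/2)^d(1-\eps^\beta)\ge a_1\eps^d$ for $\hbar$ small; membership in $\Lambda_{k,K,50}(\hbar)$ similarly gives the upper bound $\int_{B(x_k,50\eps)}|\psi_\hbar^j|^2dv_g\le a_2\eps^d$; and the middle inequality $\int_{B(x_k,\eps)}\le\int_{B(x_k,50\eps)}$ is trivial by positivity of $|\psi_\hbar^j|^2$ and monotonicity. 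The constants $a_1,a_2$ depend only on $\chi$, the doubling constants, and $(M,g)$. I expect the only genuinely delicate step to be the bookkeeping of the exponents in the union bound — making sure the admissible window $0<K<1/(2d)$, the auxiliary $\beta$, and the integer $p$ can be chosen consistently so that $dK+p(K(4\beta+2d)-1)<0$; everything else is the textbook QE extraction together with elementary estimates on the cutoffs.
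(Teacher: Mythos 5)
Your proposal is correct and follows essentially the same route as the paper: the Bienaym\'e--Tchebychev bound from Proposition~\ref{p:QE} with $m=1/2$ and $m=50$, a union bound over the $R(\eps)\leq c_2\eps^{-d}$ balls with the exponent condition $dK+p(K(4\beta+2d)-1)<0$ (which, as you note, is arranged in the proof by choosing $\beta$ small and $p$ large, not in the lemma statement itself), and then unwinding the cutoff definitions to pass from $\chi_{x_k,\eps/2}$ and $\chi_{x_k,50\eps}$ to the ball integrals. No gaps.
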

This lemma is the key ingredient that we will use to improve the usual Sogge $L^p$ upper bounds on eigenfunctions and the lower bounds on the size of nodal sets  along the full density subset.
%\begin{rema}\label{r:QE-norm-ratner}
% Using remark~\ref{r:ratner-prop}, we can improve the range of allowed exponents $K$ for a compact hyperbolic surface $M=\mathbb{H}^2/\Gamma$. Precisely, we can allow any $0<K<1/2$.
%\end{rema}

\subsection{$L^p$-estimates}
\label{ss:Lp}

In this part we make use of semiclassical $L^p$ estimates for quasimodes as in \cite{Zw12} (Chapter $10$) to obtain our new $L^p$ estimates. In fact, we will prove something slightly stronger than what we stated in the introduction and show that the $L^p$ norm of the eigenfunctions in the balls $B(x_k, \eps)$ can be controlled. In the proof we will make use of our quantum ergodicity property~\eqref{e:vol-eps-ball}. The key result of this section is the following:
\begin{prop}\label{p:Lp-small-ball} Let $p=\frac{2(d+1)}{d-1}$ and $0<K<1/(2d)$. Fix $$\eps=|\log\hbar|^{-K}.$$
Then, there exists $C_0>0$ (depending only on $(M,g)$, $K$ and $\chi$) such that, for every $1\leq k\leq R(\eps)$ and for every $j\in \Lambda_K(\hbar)$, one has
$$\frac{1}{\eps^d}\int_{B(x_k,2\eps)}|\psi_{\hbar}^j |^pdv_g \leq C_0\frac{\eps}{\hbar}.$$
\end{prop}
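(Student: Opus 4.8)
The plan is to rescale the geodesic ball $B(x_k,2\eps)$ to unit size and apply the semiclassical $L^p$ estimates for quasimodes at the critical exponent $p=\frac{2(d+1)}{d-1}$, using the $L^2$‑mass bound~\eqref{e:vol-eps-ball} of Lemma~\ref{l:density1} as the only external input; all constants produced will be uniform in $1\le k\le R(\eps)$ and in the ONB. Concretely, work in geodesic normal coordinates $z$ centered at $x_k$, so that $B(x_k,r)=\{|z|<r\}$ for $r$ below the injectivity radius and $g_{ab}(z)=\delta_{ab}+\ml{O}(|z|^2)$. Setting $z=\eps y$ and $u_\hbar(y):=\psi_\hbar^j(\exp_{x_k}(\eps y))$ for $|y|<10$, the eigenvalue equation $-\hbar^2\Delta_g\psi_\hbar^j=E_j(\hbar)\psi_\hbar^j$ transforms into $(-\tilde\hbar^2\Delta_{\tilde g}-1)u_\hbar=(E_j(\hbar)-1)u_\hbar$, where $\tilde\hbar:=\hbar/\eps=\hbar|\log\hbar|^{K}$ and $\tilde g_{ab}(y):=g_{ab}(\eps y)=\delta_{ab}+\ml{O}(\eps^2|y|^2)$. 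Since $K>0$ one has $\tilde\hbar\to0$; since $|E_j(\hbar)-1|\le\alpha\hbar\le\alpha\tilde\hbar$, $u_\hbar$ is an $\ml{O}(\tilde\hbar)$‑quasimode of $-\tilde\hbar^2\Delta_{\tilde g}$; and the metrics $\tilde g$ stay in a fixed bounded subset of $C^\infty(\{|y|<10\})$, converging to the Euclidean metric, uniformly in $k$ and $\hbar$.

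Next, fix $\tilde\chi\in C^\infty_c(\{|y|<3\})$ with $\tilde\chi\equiv1$ on $\{|y|<2\}$. The semiclassical $L^p$ estimates for quasimodes (\cite{So88,KTZ07,Zw12}), whose implied constants depend only on finitely many derivatives of the background metric and are therefore uniform over the family $\tilde g$ above, give
\[
\|u_\hbar\|_{L^p(\{|y|<2\},\tilde g)}\le\|\tilde\chi u_\hbar\|_{L^p(\tilde g)}\le C\,\tilde\hbar^{-\sigma(p)}\Big(\|\tilde\chi u_\hbar\|_{L^2(\tilde g)}+\tilde\hbar^{-1}\big\|(-\tilde\hbar^2\Delta_{\tilde g}-1)(\tilde\chi u_\hbar)\big\|_{L^2(\tilde g)}\Big),
\]
with $\sigma(p)=2\gamma(p)=\frac{d-1}{2(d+1)}$ at the critical exponent. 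Expanding $(-\tilde\hbar^2\Delta_{\tilde g}-1)(\tilde\chi u_\hbar)=(E_j(\hbar)-1)\tilde\chi u_\hbar+[-\tilde\hbar^2\Delta_{\tilde g},\tilde\chi]u_\hbar$, the first term is $\ml{O}(\tilde\hbar)\|u_\hbar\|_{L^2(\{|y|<3\})}$ and the commutator (supported in $\{2\le|y|\le3\}$) is $\ml{O}(\tilde\hbar)$ times $u_\hbar$ and $\tilde\hbar\nabla u_\hbar$; the semiclassical elliptic estimate $\tilde\hbar\|\nabla u_\hbar\|_{L^2(\{|y|<3\})}\le C\|u_\hbar\|_{L^2(\{|y|<4\})}$, valid for $\ml{O}(\tilde\hbar)$‑quasimodes, then collapses the right‑hand side to $C'\,\tilde\hbar^{-\sigma(p)}\|u_\hbar\|_{L^2(\{|y|<4\},\tilde g)}$.

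Finally, undo the rescaling: under $z=\eps y$ one has $\det\tilde g(y)=\det g(\eps y)$ and hence $dv_g=\eps^{d}\,dv_{\tilde g}$, so that $\frac{1}{\eps^d}\int_{B(x_k,2\eps)}|\psi_\hbar^j|^p\,dv_g=\|u_\hbar\|^p_{L^p(\{|y|<2\},\tilde g)}$, while $\|u_\hbar\|^2_{L^2(\{|y|<4\},\tilde g)}=\frac{1}{\eps^d}\int_{B(x_k,4\eps)}|\psi_\hbar^j|^2\,dv_g\le a_2$ by~\eqref{e:vol-eps-ball} (since $4\eps<50\eps$). Combining with the previous step and $\tilde\hbar=\hbar/\eps$,
\[
\frac{1}{\eps^d}\int_{B(x_k,2\eps)}|\psi_\hbar^j|^p\,dv_g\le C''\,\tilde\hbar^{-\sigma(p)p}=C''\Big(\frac{\eps}{\hbar}\Big)^{\sigma(p)p},
\]
and the arithmetic identity $\sigma(p)p=\frac{d-1}{2(d+1)}\cdot\frac{2(d+1)}{d-1}=1$ yields precisely $C_0\,\eps/\hbar$, with $C_0$ independent of $k$, of $\hbar$, and of the ONB.

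The delicate point is the \emph{uniformity} of the semiclassical $L^p$ quasimode estimates: the implied constant must not depend on the center $x_k$ nor on $\hbar$, which is guaranteed because the rescaled metrics $\tilde g$ form a precompact family in $C^\infty$ tending to the Euclidean metric, and one must check that the effective semiclassical parameter $\tilde\hbar=\hbar|\log\hbar|^{K}$ really does tend to $0$ so that these estimates are applicable — this is exactly where $K>0$ (part of $0<K<1/(2d)$) is used, in tandem with Lemma~\ref{l:density1} providing the $\ml{O}(1)$ bound on the rescaled $L^2$ mass.
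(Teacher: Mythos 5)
Your proposal is correct and follows the same overall strategy as the paper: rescale by $\eps$ so that the effective semiclassical parameter becomes $h=\hbar/\eps$, view the cut-off eigenfunction as an $\ml{O}(h)$ quasimode of the rescaled Laplacian, apply the critical-exponent semiclassical $L^p$ estimate, and feed in the $L^2$ mass bound of Lemma~\ref{l:density1}. But you close the two technical steps differently, and in a somewhat more elementary way. First, you invoke the Koch--Tataru--Zworski form of the quasimode estimate, $\|u\|_{L^p}\leq Ch^{-1/p}\left(\|u\|_{L^2}+h^{-1}\|(Q_h-E)u\|_{L^2}\right)$, which only requires the \emph{upper} bound in~\eqref{e:vol-eps-ball}; the paper instead uses Theorem $10.10$ of \cite{Zw12}, stated as $\|u\|_{L^p}\leq Ch^{-1/p}\|u\|_{L^2}$ for genuine $\ml{O}(h)\|u\|_{L^2}$-quasimodes, and therefore needs the \emph{lower} mass bound $a_1\eps^d$ as well (this is why Lemma~\ref{l:density1} is extracted with both $m=1/2$ and $m=50$), together with a verification that the rescaled family is localized in a compact region of phase space; that localization is supplied in the paper by inserting the spectral cutoff $\chi_1(-\hbar^2\Delta_g)$. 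Second, you dispose of the cutoff commutator $[-h^2\Delta_{\tilde g},\tilde\chi]$ by an explicit first-order computation plus a Caccioppoli-type bound $h\|\nabla u\|_{L^2(\{|y|<3\})}\leq C\|u\|_{L^2(\{|y|<4\})}$ obtained by integrating the exact eigenvalue equation by parts, whereas the paper writes the commutator as $h\Op_h^w(r_0)+h^2\Op_h^w(r_1)$ with $r_0\in S^{1,0}$ and must again use the frequency cutoff $\chi_1(-\hbar^2\Delta_g)$ (paragraph~\ref{sss:QM}) to make $\Op_h^w(r_0)$ effectively bounded. Your route buys simplicity (no pseudodifferential commutator analysis, no frequency-localization step, no lower mass bound needed), at the price of relying on the additive KTZ formulation of the $L^p$ quasimode bound; since the rescaled operator is a differential operator elliptic at fiber infinity and the metrics $\tilde g$ form a $C^\infty$-precompact family converging to the Euclidean metric (so the curvature and nondegeneracy hypotheses on $\{q_0=E\}$ hold uniformly, as the paper also checks), that invocation is legitimate, and \cite{KTZ07} is indeed among the references the paper cites for exactly this purpose.
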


%\begin{rema}\label{r:Lpnorm-ratner}
%Implementing remark~\ref{r:QE-norm-ratner} in the proof below, we can one more time improve the range of allowed exponents $K$ on a compact hyperbolic surface.
%\end{rema}

Using~\eqref{e:number-eps-ball}, we easily deduce our main result on the $L^p$ norm of eigenfunctions of the Laplacian. Namely, there exists $C_K$ such that for all $j\in\Lambda_K(\hbar)$
$$\|\psi_{\hbar}^j \|_{L^p(M)}\leq C_K\left(\hbar^{-\frac{1}{p}}|\log\hbar|^{-\frac{K}{p}}\right),\quad p=\frac{2(d+1)}{d-1}.$$
We can then use an interpolation with $L^2$ norm to get our result for all $ 2<p< \frac{2(d+1)}{d-1} $.

\subsection{Proof of Proposition~\ref{p:Lp-small-ball}} In order to prove this Proposition, we will first draw a few consequences of our quantum ergodicity result. We will then show that, along $\Lambda_K(\hbar)$, and after rescaling using the change of coordinates $x= \text{exp}_{x_k}(\eps y)$, the restriction of the eigenfunctions to the rescaled balls are quasimodes of order 
\begin{equation}\label{h} h=\hbar/\eps, \end{equation} of a certain $h$-pseudodifferential operator (the rescale of $\Delta_g$). Finally we will use the semiclassical $L^p$ estimates from~\cite{Zw12} to establish an upper bound for the $L^p$ norm of these rescaled quasimodes.

\subsubsection{Preliminary observations}
 Let $1\leq k\leq R(\eps)$. Clearly
 $$\int_{B(x_k,2\eps)}|\psi_{\hbar}^j |^pdv_g \leq\int_{B(x_k,4\eps)}|\chi_{{x}_k,2\eps}\psi_{\hbar}^j |^pdv_g.$$
Using the change of coordinates $x= \text{exp}_{x_k}(\eps y)$, we get 
\begin{equation}\label{e:Lp-change-variable}\int_{B(x_k,2\eps)}|\psi_{\hbar}^j |^pdv_g \leq C_M\eps^{d}\int_{B_k(0,4)}|\chi(\|y\|_{x_k}/2)\psi_{\hbar}^j\circ\exp_{x_k}(\eps y) |^pdy,\end{equation}
where $B_k(0,4)$ is the ball centered at $0$ in $T_{x_k}M$ and $C_M$ depends only on the manifold $(M,g)$. We will now consider the following element of $L^2(B_k(0,4))$:
$$\tilde{u}_{\hbar,\eps}^j(y):=\chi(\|y\|_{x_k}/2)\psi_{\hbar}^j\circ\exp_{x_k}(\eps y),$$
and show that it is a quasimode of a certain order for a ``rescaling'' of the operator $-\hbar^2\Delta$.

Before that, we draw two simple consequences of the quantum ergodicity property. The first observation is that by \eqref{e:vol-eps-ball}, we get
%$$\left\|\psi_{\hbar}^j\circ\exp_{x_k}(\eps y)\right\|_{L^2(B_k(0,4))}\leq C_Ma_2,$$
%and then, using the lower bound in~\eqref{e:vol-eps-ball},
\begin{equation}\label{e:Lp-QE1}\left\|\psi_{\hbar}^j\circ\exp_{x_k}(\eps y)\right\|_{L^2(B_k(0,50))}\leq C_Ma_2\leq c_M\frac{a_2}{a_1}\left\|\tilde{u}_{\eps,\hbar}^j\right\|_{L^2(\IR^d)},\end{equation}
where $C_M$ and $c_M$ depends only on $(M,g)$. The second observation is that clearly the first inequality also implies that
\begin{equation}\label{e:Lp-QE2}\left\|\tilde{u}_{\eps,\hbar}^j\right\|_{L^2(\IR^d)}=\ml{O}(1).\end{equation}

\subsubsection{Rescaled Laplacian}
We now introduce the following differential operator on $T_{x_k}M\simeq \IR^d$:
$$Q_h:=-h^2\chi(\|y\|_{x_k}/{20})\left(\sum_{i,j}g^{i,j}(\eps y)\frac{\partial^2}{\partial y_i\partial y_j}+\frac{1}{D_g(\eps y)}\frac{\partial}{\partial y_i}\left((D_g g^{i,j})(\eps y) \right)\frac{\partial }{\partial y_j}\right),$$
where $(g^{i,j})$ are the matrix coefficients of the metric $g$ in the coordinates $y=\kappa_k(x):=\exp_{x_k}^{-1}(x)$, and $D_g=\sqrt{\det(g_{i,j})}.$ We recall that $h=\frac{\hbar}{\eps}$.

Since $\psi_{\hbar}^j$ is an eigenmode of $-\hbar^2\Delta_g$ with eigenvalue $E$ in $[1-\alpha\hbar,1+\alpha\hbar]$, clearly one has
$$\forall y\in B_k(0,4),\ Q_h\left(\psi_{\hbar}^j\circ\kappa_k^{-1}(\eps y)\right)=E\psi_{\hbar}^j\circ\kappa_k^{-1}(\eps y).$$
%We now write, for every $y\in \IR^d$,
%$$Q_h\tilde{u}_{\eps,\hbar}^j(y)=\chi(\|y\|_{x_k}/2)Q_h\psi_{\hbar,\eps}^j(y)+[Q_h,\chi(\|.\|_{x_k}/2)]\psi_{\hbar,\eps}^j(y),$$
%where $\psi_{\hbar,\eps}^j(y):=\chi(\|y\|_{x_k}/4)\psi_{\hbar}^j\circ\kappa_k^{-1}(\eps y)$. 
%Thus, we deduce that, for all $y\in \IR^d$,
%$$Q_h\tilde{u}_{\eps,\hbar}^j(y)=E\tilde{u}_{\eps,\hbar}^j(y)+[Q_h,\chi(\|.\|_{x_k}/2)]\psi_{\hbar,\eps}^j(y),$$
This implies that 
$$\left\|(Q_h-E)\tilde{u}_{\eps,\hbar}^j\right\|_{L^2(\IR^d)}=\left\|[Q_h,\chi(\|.\|_{x_k}/2)]\psi_{\hbar,\eps}^j\right\|_{L^2(\IR^d)},$$
where $\psi_{\hbar,\eps}^j(y):=\chi(\|y\|_{x_k}/4)\psi_{\hbar}^j\circ\kappa_k^{-1}(\eps y)$. Using the composition laws for pseudodifferential operators in $\IR^d$, we get
$$[Q_h,\chi(\|.\|_{x_k}/2)]=h\Op_h^w(r_0)+h^2\Op_h^w(r_1),$$
$\Op_h^w$ denotes the Weyl quantization on $\IR^d$, $r_1\in S^{0,0}(\IR^{2d})$ and $r_0:=\frac{1}{i}\{q_0,\chi(\|.\|_{x_k}/2)\}$ in $S^{1,0}(\IR^{2d})$ with $q_0(y,\eta)=\chi(\|y\|_{x_k}/20)\sum_{i,j} g^{i,j}(\eps y)\eta_i\eta_j.$ Thanks to the Calder\'on-Vaillancourt theorem, we get
$$\left\|(Q_h-E)\tilde{u}_{\eps,\hbar}^j\right\|_{L^2(\IR^d)}\leq h\left\|\Op_h^w(r_0)\psi_{\hbar,\eps}^j\right\|_{L^2(\IR^d)}+Ch^2\left\|\psi_{\hbar,\eps}^j\right\|_{L^2(B_k(0,50))},$$
for some uniform constant $C>0$. Therefore by \eqref{e:Lp-QE1}, we get
\begin{equation}\label{e:QM-step0}\left\|(Q_h-E)\tilde{u}_{\eps,\hbar}^j\right\|_{L^2(\IR^d)}\leq h\left\|\Op_h^w(r_0)\psi_{\hbar,\eps}^j\right\|_{L^2(\IR^d)}+\ml{O}(h^2)\left\|\tilde{u}_{\eps,\hbar}^j\right\|_{L^2(\IR^d)}.\end{equation}

\subsubsection{Order of the quasimode}\label{sss:QM}

In order to show that $\tilde{u}_{\eps,\hbar}^j$ is in fact a $\ml{O}(h)$ quasimode for the operator $Q_h$, by \eqref{e:QM-step0} it just remains to show that $\left\|\Op_h^w(r_0)\psi_{\hbar,\eps}^j\right\|_{L^2(\IR^d)}$ is uniformly bounded. The only difficulty is that $\Op_h^w(r_0)$ is not a priori bounded on $L^2$ as $r_0$ belongs to $S^{1,0}(\IR^{2d})$. However we can easily overcome this issue by inserting an appropriate smooth cutoff function in $\xi$ variable as follows.

First, we write
\begin{equation}\label{e:QM-step1}\Op_h^w(r_0)\psi_{\hbar,\eps}^j(y)=T_{\eps}\Oph^w(\tilde{T}_{\eps}(r_0))\left((\kappa_k^{-1})^*(\chi_{x_k,4\eps}\psi_{\hbar}^j)\right),\end{equation}
where $\tilde{T}_{\eps}(r_0)(y,\eta):=r_0(y/\eps,\eta)$, and $T_{\eps}(f)(y):=f(\eps y)$. Hence
\begin{equation}\label{e:QM-step2}\left\|\Op_h^w(r_0)\psi_{\hbar,\eps}^j\right\|_{L^2(\IR^d)}=\eps^{-d/2}\left\|\Oph^w(\tilde{T}_{\eps}(r_0))\left((\kappa_k^{-1})^*(\chi_{x_k,4\eps}\psi_{\hbar}^j)\right)\right\|_{L^2(\IR^d)}.\end{equation}
Using Remark~\ref{r:cutoff}, we observe that $\chi_1(-\hbar^2\Delta_g)\psi_{\hbar}^j=\psi_{\hbar}^j$, and that $\chi_1(-\hbar^2\Delta_g)$ is an element of $\Psi^{-\infty}(M)$ with principal symbol $\theta(x,\xi)=\chi_1((\|\xi\|_x^*)^2)$ (\cite{Zw12}, Chapter $14$). Therefore, by replacing $\psi_{\hbar}^j$ with $\chi_1(-\hbar^2\Delta_g)\psi_{\hbar}^j$ and noting that $\tilde{T}_{\eps}(r_0)(\kappa_k^{-1})^*(\chi_{x_k, \eps}\theta)$ belongs to $\mathcal S (\R^{2d})$, we obtain
$$\left\|\Oph^w(\tilde{T}_{\eps}(r_0))\left((\kappa_k^{-1})^*(\chi_{x_k,4\eps}\psi_{\hbar}^j)\right)\right\|_{L^2(\IR^d)}\leq\ml{O}(1)\left\|\chi_{x_k,10\eps}\psi_{\hbar}^j\right\|_{L^2(M)} +\ml{O}(\hbar^{1-2\nu}).$$
where $0<\nu<1/2$ is such that $\hbar^{\nu}\leq\eps=\eps_{\hbar}$ for $\hbar$ small enough and where the constants are uniform for $1 \leq k\leq R(\eps)$.
Applying this upper bound to~\eqref{e:QM-step2}, and using Lemma~\ref{l:density1}, and the fact that $\eps=|\log\hbar|^{-K}$, we get
\begin{equation}\label{e:QM-step3}\left\|\Op_h^w(r_0)\psi_{\hbar,\eps}^j\right\|_{L^2(\IR^d)}\leq \ml{O}(1).\end{equation}
\begin{rema} We note that, in order to get a $\ml{O}(1)$, we need that $\hbar^{1-2\nu}\eps^{-\frac{d}{2}}$ remains bounded. 
As we will take $\eps=|\log\hbar|^{-K}$, this restriction does not matter. 
%Yet, in section~\ref{s:torus}, we will take $\eps$ of the form $\hbar^{\nu_1}$ for some $0<\nu_1<1/2$. Thus, we will have to impose the restriction $0<\nu_1<1/(d+2)$. This can probably be improved but, in any case, the dynamical arguments in section~\ref{s:torus} will impose $\nu_1<3/(40d)$ which obviously implies the previous upper bound on $\nu_1>0$. 
 
\end{rema}

Finally, combining this with \eqref{e:QM-step0} and \eqref{e:Lp-QE1}, we arrive at
$$\left\|(Q_h-E)\tilde{u}_{\eps,\hbar}^j\right\|_{L^2(\IR^d)}=\ml{O}(h)\left\|\tilde{u}_{\eps,\hbar}^j\right\|_{L^2(\IR^d)},$$
where the constant in the remainder is uniform  for $j$ in $\Lambda_K(\hbar)$ and for $1\leq k\leq R(\eps)$ (note that the rescaled quasimode $\tilde{u}_{\eps,\hbar}^j$ depends on $k$). This is precisely the definition of a quasimode of order $h$.

\subsubsection{Semiclassical $L^p$-estimates}

We are now in position to apply the semiclassical $L^p$ estimates from~\cite{Zw12} (precisely Theorem $10.10$), which are valid for $\ml{O}(h)$-quasimodes.

Using Remark~\ref{r:cutoff} and similar arguments as we delivered in paragraph~\ref{sss:QM}, we can see that the sequence $(\tilde{u}_{\eps,\hbar}^j)_{0<h\leq h_0}$ is localized in a compact subset $\ml{K}$ of phase space in the sense of section $8.4$ of \cite{Zw12}. For instance, we can take $\ml{K}$ to be 
$$\overline{B_k(0,5)}\times \{ \half \leq \xi^2 \leq 3/2 \}.$$
We then note that the principal symbol $$q_0-E=\chi(\|y\|^2_{x_k}/20)\sum g^{jk}(\eps y) \eta_j \eta_k -E,$$ of $Q_h -E$ satisfies:
\begin{itemize}
 \item $ \partial_{\eta} (q_0-E) \neq 0 \; \text{on} \; \{q_0=E\} \cap \ml{K}, $
 \item for all $x_0 \in \overline{B_k(0,5)}$, the hypersurface $\{q_0(x_0, \xi)=E\}$ has a nonzero second fundamental form (nonzero curvature).
\end{itemize}
Hence  all the required conditions of Theorem $10.10$ in \cite{Zw12}, for the semiclassical $L^p$ estimates to hold, are satisfied and we get
$$\| \tilde{u}_{\eps,\hbar}^j \|_{L^p(\IR^d)} \leq C h^{-1/p} \| \tilde{u}_{\eps,\hbar}^j \|_{L^2(\mathbb R^d)}, \qquad p=\frac{2(d+1)}{d-1}, $$
where $C$ only depends on $(M,g)$, and on a finite number of derivatives of  $\chi$, $\chi_1$, and the exponential map. Therefore, by \eqref{e:Lp-change-variable} and \eqref{e:Lp-QE2}, we get
$$ \eps^{-\frac{d}{p}}\| \psi_{\hbar}^j\|_{L^p(B(x_k, 2 \eps))} \leq C' \left(\frac{\eps}{\hbar}\right)^{\frac{1}{p}} , $$
where the constant is uniform for $j\in \Lambda_K(\hbar)$ and $1\leq k\leq R(\eps)$. We recall again that $h=\frac{\hbar}{\eps}$

\subsubsection{$L^{\infty}$ bounds}
\label{sss:Linfinity}

Using the same rescaling argument as in the last section and also the semiclassical $L^\infty$ estimates of \cite{KTZ07} and \cite{SmZw13}, we get
$$ \| \psi_{\hbar}^j\|_{L^\infty(B(x_k, 2 \eps))} \leq C \eps^{\frac{d-1}{2}} \hbar^{-\frac{d-1}{2}}, $$ where $C$ is again independent of $k$ and $j\in \Lambda_K(\hbar)$. We then have
$$\| \psi_{\hbar}^j\|_{L^\infty(M)} =\max_{k} \{\| \psi_{\hbar}^j\|_{L^\infty(B(x_k, 2 \eps))} \} \leq C\eps^{\frac{d-1}{2}} \hbar^{-\frac{d-1}{2}}.$$  This proves our claimed $L^\infty$ bounds.

\subsection{Lower bounds for nodal sets on a fixed open set $U$}

Recall that we want to give a lower bound on the size of nodal sets on every fixed open set $U$. If we were only interested in the case $U=M$, then we could conclude more quickly using the results of~\cite{SoZe11, HS12} (see paragraph \ref{sss:HezSog}).

\subsubsection{Colding-Minicozzi approach}

The main lines of our argument are similar to the ones in~\cite{CM}. The proof in this reference was based on the study of the so-called $q$-good balls. Namely, given a constant $q>1$ and an eigenmode $\psi_{\hbar}$ of the operator $-\hbar^2 \Delta_g$, a ball $B(x,r)\subset M$ (centered at $x$ and of radius $r>0$) is said to be $q$-good if
$$\int_{B(x,2r)}|\psi_{\hbar}|^2dv_g\leq 2^q\int_{B(x,r)}|\psi_{\hbar}|^2dv_g.$$
A ball which is not $q$-good is said to be $q$-bad. The main advantage of $q$-good balls is that you can control from below the volume of the nodal set on them. Precisely, it was shown in~\cite{CM} (Proposition $1$) that
\begin{prop}\label{p:CM} Let $q>1$ and let $r_0>1$. There exist $\mu>0$ and $\hbar_0>0$ so that if $0<\hbar\leq\hbar_0$, $E\in[1-\alpha\hbar,1+\alpha\hbar]$, $-\hbar^2\Delta_g\psi_{\hbar}=E\psi_{\hbar}$ on $B(x,r)\subset M$ with $r\leq r_0\hbar$, $\psi_{\hbar}$ vanishes somewhere on $B(x,r/3)$, and 
$$\int_{B(x,2r)}|\psi_{\hbar}|^2dv_g\leq 2^q\int_{B(x,r)}|\psi_{\hbar}|^2dv_g,$$
then
$$\ml{H}^{n-1}\left(B(x,r)\cap\{\psi_{\hbar}=0\}\right)\geq\mu r^{n-1}.$$
\end{prop}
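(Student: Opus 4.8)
The plan is to reduce the statement to a local, rescaled problem on a ball of unit size and then invoke a quantitative unique continuation / frequency function argument, exactly as in~\cite{CM}. First I would rescale: set $y = (x-x_0)/r$ (working in normal coordinates centered at the center of the ball), and put $v(y) := \psi_{\hbar}((x_0)+ry)$. Since $r \le r_0\hbar$, the function $v$ satisfies an elliptic equation $L_h v = 0$ on the unit ball $B(0,1)$ where $L_h$ is a small perturbation (uniformly in the relevant parameters, because $r/\hbar \le r_0$) of the flat Laplacian; concretely $-\frac{r^2}{\hbar^2}\hbar^2\Delta_g$ rescaled has bounded, uniformly elliptic coefficients with bounded derivatives, and the eigenvalue term $E r^2/\hbar^2 \le (1+\alpha\hbar)r_0^2$ is a bounded zeroth order potential. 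Thus the whole problem is reduced to: a solution $v$ of $\operatorname{div}(A\nabla v) + V v = 0$ on $B(0,1)$ with $A,V$ uniformly controlled, with the doubling hypothesis $\int_{B(0,2)}|v|^2 \le 2^q \int_{B(0,1)}|v|^2$ (note $B(x,2r)$ and $B(x,r)$ rescale to $B(0,2)$ and $B(0,1)$), and with $v$ vanishing somewhere in $B(0,1/3)$; the conclusion to prove is $\mathcal{H}^{n-1}(\{v=0\}\cap B(0,1)) \ge \mu$.

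The key steps, in order, are: (1) translate the $L^2$ doubling hypothesis into a bound on Almgren's frequency function of $v$ on, say, $B(0,3/2)$ — the standard fact is that $\int_{B(0,2r)}|v|^2 \le 2^q\int_{B(0,r)}|v|^2$ forces the frequency $N(v,x_0,\rho)$ to be $\le C(q)$ for $\rho$ in a fixed subinterval; (2) use the bounded frequency to get the three-balls / doubling inequality at all small scales inside $B(0,1)$ with a uniform constant, hence $v$ is not too degenerate near the point $z_0 \in B(0,1/3)$ where it vanishes; (3) apply the local lower bound for nodal volume of solutions with bounded frequency — this is the heart of~\cite{CM}, using that near a zero of a solution with bounded doubling index the zero set occupies a definite proportion of every small ball, giving $\mathcal{H}^{n-1}(\{v=0\}\cap B(z_0,c)) \ge \mu$ for some fixed $c, \mu$ depending only on $q$, $n$, and the ellipticity constants; (4) since $B(z_0,c)\subset B(0,1)$ for $c$ small, this yields the claim. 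Finally, unwinding the rescaling multiplies the Hausdorff measure by $r^{n-1}$, producing $\mathcal{H}^{n-1}(B(x,r)\cap\{\psi_\hbar=0\}) \ge \mu r^{n-1}$.

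The main obstacle — and the step I would lean on~\cite{CM} for rather than redo — is step (3), the quantitative statement that a solution of a uniformly elliptic equation with bounded frequency and a zero in a slightly smaller ball must have nodal set of codimension-one measure bounded below. This requires the Donnelly--Fefferman-type machinery (frequency function monotonicity, doubling estimates, and a compactness/normal-family argument ruling out degeneration), together with care that all constants depend only on $q$, the dimension, and the uniform ellipticity bounds on the rescaled operator $L_h$ — the latter being exactly what the constraint $r \le r_0\hbar$ guarantees. One should also check that the perturbation terms coming from the metric and from the eigenvalue do not spoil the frequency function argument, but since they are lower order with uniformly bounded coefficients this is standard. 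I would present steps (1)--(2) and (4) in detail and cite Proposition~1 of~\cite{CM} for (3), noting that our hypotheses match theirs after rescaling.
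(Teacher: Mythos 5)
The paper does not prove this proposition at all: it is quoted verbatim (in semiclassical normalization) from Proposition 1 of \cite{CM} and used as a black box, with only the remark that it holds on any smooth compact manifold without curvature assumptions. Your plan is therefore not in conflict with the paper, but note two things. First, as written it is circular: Proposition 1 of \cite{CM} \emph{is} the statement under consideration (the hypotheses $r\leq r_0\hbar$, the vanishing on $B(x,r/3)$, and the $2^q$-doubling condition are exactly theirs after the change of normalization $\hbar\sim\lambda^{-1/2}$), so ``citing Proposition 1 of \cite{CM} for step (3)'' collapses your argument to the same literature citation the paper makes, and the rescaling and frequency discussion in steps (1)--(2) then does no independent work. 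Second, your description of the heart of \cite{CM} is not accurate: Colding and Minicozzi deliberately avoid the Donnelly--Fefferman/frequency-function machinery. After rescaling to a ball of unit size (where, thanks to $r\leq r_0\hbar$, the rescaled operator has uniformly controlled coefficients and a bounded potential, as you correctly note), they use elliptic estimates and the mean value inequality together with the doubling hypothesis and the vanishing point to show that both $\{\psi_{\hbar}>0\}$ and $\{\psi_{\hbar}<0\}$ occupy a definite proportion of the ball, and then conclude by an isoperimetric-type argument that the interface has codimension-one Hausdorff measure bounded below. So if you intend a self-contained proof, it is this elementary route (or an independently proved bounded-doubling-index nodal lower bound) that must be supplied in step (3); otherwise the honest move, and the one the paper makes, is simply to quote \cite{CM}.
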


\begin{rema}
 We stress that this proposition is valid for any smooth compact Riemannian manifold without any assumption on the curvature.
\end{rema}

This proposition provides a local estimate on the nodal set provided that we are on a $q$-good ball on which $\psi_{\hbar}$ vanishes. We now state a result of  Courant which guarantees such a vanishing property (for a proof see for example Lemma $1$ of \cite{CM})
\begin{lemm}\label{l:CM}
 There exists $r_0>0$ so that if $0<\hbar\leq 1/2$, $E\in[1-\alpha\hbar,1+\alpha\hbar]$, $-\hbar^2 \Delta_g\psi_{\hbar}=E\psi_{\hbar}$, then $\psi_{\hbar}$ has a zero in every ball of radius $\frac{r_0\hbar}{3}$.
\end{lemm}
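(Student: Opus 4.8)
The plan is to run the classical Courant-type argument via domain monotonicity of Dirichlet eigenvalues: a ball on which $\psi_\hbar$ does not vanish sits inside a single nodal domain, and that nodal domain is then forced to be too small to carry an eigenfunction at energy $\sim\hbar^{-2}$.

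First I would record a uniform upper bound for the bottom of the Dirichlet spectrum of small geodesic balls: there exist $C_M>0$ and $\rho_M\in(0,\mathrm{inj}(M,g))$, depending only on $(M,g)$, such that for every $x_0\in M$ and every $0<\rho\le\rho_M$ the first Dirichlet eigenvalue $\lambda_1\big(B(x_0,\rho)\big)$ of $-\Delta_g$ on $B(x_0,\rho)$ satisfies $\lambda_1\big(B(x_0,\rho)\big)\le C_M\rho^{-2}$. This is immediate from the Rayleigh quotient: in normal coordinates centred at $x_0$ the metric is comparable to the Euclidean one on $B(x_0,\rho_M)$, with constants controlled by the (uniformly bounded) sectional curvature, so inserting a rescaled fixed bump $y\mapsto\phi_0(y/\rho)$ into the quotient yields the bound, uniformly in $x_0$ by compactness of $M$.

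Next, suppose toward a contradiction that for some $x_0\in M$ the eigenfunction $\psi_\hbar$ does not vanish on $B:=B(x_0,\rho)$ with $\rho\le\rho_M$; replacing $\psi_\hbar$ by $-\psi_\hbar$ if needed, we may assume $\psi_\hbar>0$ on $B$. Let $\Omega$ be the connected component of $\{\psi_\hbar\neq 0\}$ containing $x_0$; since $B$ is connected and disjoint from the nodal set, $B\subset\Omega$. The function $\psi_\hbar\mathbf 1_\Omega$ lies in $H^1_0(\Omega)$ and is a sign-definite Dirichlet eigenfunction of $-\Delta_g$ on $\Omega$ with eigenvalue $E/\hbar^2$; being positive it must be a ground state, so $\lambda_1(\Omega)=E/\hbar^2$. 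Domain monotonicity ($B\subset\Omega$ gives $\lambda_1(\Omega)\le\lambda_1(B)$) combined with the previous step yields $E/\hbar^2\le C_M\rho^{-2}$. For $\hbar\le\hbar_0$ small one has $E\ge 1-\alpha\hbar\ge\tfrac12$, hence $\rho^2\le 2C_M\hbar^2$, i.e. $\rho\le\sqrt{2C_M}\,\hbar$.

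The contrapositive is exactly the claim: choosing $r_0:=3\sqrt{2C_M}+1$ and shrinking $\hbar_0$ so that $r_0\hbar_0/3\le\rho_M$, any ball of radius $r_0\hbar/3>\sqrt{2C_M}\,\hbar$ must meet the nodal set of $\psi_\hbar$. The only points needing mild care are the uniformity in $x_0$ of the ball eigenvalue bound (handled by compactness) and the standard fact that the restriction of an eigenfunction to one of its nodal domains is an $H^1_0$ ground state there; both are classical. One may alternatively bypass the nodal-domain discussion entirely by comparing $\psi_\hbar$ on $B$ with the first Dirichlet eigenfunction $\phi$ of $B$ via Green's formula, where the boundary term $\int_{\partial B}\psi_\hbar\,\partial_\nu\phi$ is $\le 0$ by the Hopf lemma (outward normal derivative of a positive function vanishing on $\partial B$), which gives $E/\hbar^2\le\lambda_1(B)$ directly. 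There is no genuine obstacle here — this is Courant's nodal-domain bound localized at energy $\hbar^{-2}$.
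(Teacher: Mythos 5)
Your argument is correct, and it is the classical one. Note that the paper does not actually prove Lemma~\ref{l:CM}: it simply cites Lemma~1 of \cite{CM}, so there is no ``paper proof'' to match; what you wrote is precisely the standard Courant-type argument underlying that citation. Both of your routes work: the nodal-domain version (a ball avoiding the zero set lies in a single nodal domain $\Omega$, the restriction $\psi_\hbar\mathbf{1}_\Omega\in H^1_0(\Omega)$ is a positive Dirichlet eigenfunction, hence $E/\hbar^2=\lambda_1(\Omega)\le\lambda_1(B)\le C_M\rho^{-2}$ by domain monotonicity), and the Green's-identity/Barta version with the first Dirichlet eigenfunction of the ball, which avoids the $H^1_0$-extension discussion altogether and only needs $\partial_\nu\phi\le 0$ on $\partial B$ (no Hopf lemma required). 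The only point worth flagging is the range of $\hbar$: your proof, as written, yields the conclusion for $0<\hbar\le\hbar_0$ rather than for all $0<\hbar\le 1/2$ as in the statement, because you need $E\ge 1/2$ and $r_0\hbar/3\le\rho_M$. This is harmless for the paper's use (only small $\hbar$ matters), and the remaining range $\hbar\in[\hbar_0,1/2]$ can be absorbed by enlarging $r_0$ so that $r_0\hbar_0/3\ge\operatorname{diam}(M)$, in which case every such ball is all of $M$ and any eigenfunction with $E>0$ has zero mean, hence a zero; the degenerate possibility $E=0$ (constant eigenfunction) is an artifact of the statement itself when $\alpha\hbar\ge 1$, not of your argument.
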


By applying these two properties to a proper cover of $M$ with balls of radius $r_0\hbar$, the proof of \cite{CM} boils down to counting the number of $q$-good balls in a cover (for some large enough $q>0$).

Our plan is to use a similar counting argument as \cite{CM} except that we will take advantage of the small-scale quantum ergodic property of eigenfunctions. For such eigenfunctions, we will be able to count the number of $q$-good balls in balls of radius $\eps$ of order $|\log\hbar|^{-K}$ (with $0<K<1/(2d)$).

\subsubsection{Lower bounds on the number of $q$-good balls}

First, we cover $M$ with balls $(B(\tilde{x}_k, r_0\hbar))_{k=1,\ldots, \tilde{R}(\hbar)}$ with $r_0$ defined in Lemma \ref{l:CM}. In particular, any eigenfunction $\psi_{\hbar}$ of $-\hbar^2 \Delta_g$ on the space $\mathbf{1}_{[1-\alpha\hbar,1+\alpha\hbar]}(-\hbar^2 \Delta_g)L^2(M)$ has a zero in every ball $B(\tilde{x}_k,r_0\hbar/3)$. In addition, we require that the covering is chosen in such a way that each point in $M$ is contained in $C_g$-many of the double balls $B(\tilde{x}_k, 2r_0\hbar)$. Recall that the number $C_g$ can be chosen to be only dependent on $(M,g)$, and hence independent of $0<\hbar\leq 1/2$~\cite{CM}.

We now fix $1\leq k\leq R(\eps)$ and estimate the number of $q$-good balls \footnote{We underline that we count the number of $q$-good balls for a given eigenfunction in the full density subsequence obtained in Lemma \ref{l:density1}. Also in~\cite{CM}, the authors count the number of $q$-good balls over $M$.} inside $B(x_k, 2\epsilon)$.

Let $\{B(\tilde{x}_l, r_0\hbar):l\in A_k\}$ be the set of those balls in the covering which have non-empty intersection with $B(x_k, \eps)$. Then, taking $\hbar>0$ small enough to ensure $4r_0\hbar\leq\eps$, one has
$$B(x_k, \eps) \subset \bigcup_{l\in A_k} B(\tilde{x}_l, r_0\hbar) \subset \bigcup_{l\in A_k} B(\tilde{x}_l, 2r_0\hbar) \subset B(x_k, 2\eps).$$ 
We denote by $A_{q-\text{good}}^k$ the subset of indices such that $l\in A_k$ and $B(\tilde{x}_l, r_0\hbar)$ is a $q$-good ball. Then, we let $\mathcal G_k= \bigcup_{l\in A_{q-\text{good}}^k} B(\tilde{x}_l, r_0\hbar).$ We start our proof with the following lemma which is an analogue of Lemma $3$ of \cite{CM} for balls of radius $\eps$:
\begin{lemm}\label{l:average-good} There exists $q_M> 1$ depending only on $(M,g)$ such that, for every $\hbar>0$ small enough, and, for every $j\in \Lambda_K(\hbar)$, one has
\begin{equation} \label{MassOfG} \quad \int_{\mathcal G_k}|\psi_{\hbar}^j |^2dv_g \geq \half a_1 \eps^{d},\end{equation} 
where $a_1>0$ is the same constant as in Lemma~\ref{l:density1}.
\end{lemm}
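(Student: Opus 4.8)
The plan is to show that $q$-bad balls cannot carry too much $L^2$ mass, and then to use the lower bound $a_1\eps^d$ on the mass of $\psi_\hbar^j$ in $B(x_k,\eps)$ from Lemma~\ref{l:density1} together with the upper bound $a_2\eps^d$ on the mass in $B(x_k,50\eps)$ to conclude that at least half of the mass sits on the union of $q$-good balls, provided $q$ is chosen large enough (depending only on $(M,g)$). First I would set up the dyadic/telescoping argument of Colding--Minicozzi: for a $q$-bad ball $B(\tilde{x}_l,r_0\hbar)$ we have, by definition, $\int_{B(\tilde{x}_l,r_0\hbar)}|\psi_\hbar^j|^2\,dv_g < 2^{-q}\int_{B(\tilde{x}_l,2r_0\hbar)}|\psi_\hbar^j|^2\,dv_g$. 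Iterating this inequality $N$ times as long as the doubled radius stays below $50\eps$ (i.e. for $N \lesssim \log_2(\eps/\hbar)$ steps), one gets $\int_{B(\tilde{x}_l,r_0\hbar)}|\psi_\hbar^j|^2\,dv_g < 2^{-qN}\int_{B(\tilde{x}_l,2^N r_0\hbar)}|\psi_\hbar^j|^2\,dv_g \le 2^{-qN}\int_{B(x_k,50\eps)}|\psi_\hbar^j|^2\,dv_g \le 2^{-qN} a_2\eps^d$, where the last step uses~\eqref{e:vol-eps-ball}. (One must first guarantee $\tilde{x}_l \in B(x_k,\eps)$ and $4r_0\hbar \le \eps$, so that $B(\tilde{x}_l,2^N r_0\hbar) \subset B(x_k,50\eps)$ for the relevant range of $N$; this is already arranged in the covering set-up.)

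Next I would sum over the $q$-bad indices $l \in A_k \setminus A^k_{q\text{-good}}$. The number of balls $B(\tilde{x}_l,r_0\hbar)$ meeting $B(x_k,\eps)$ is $\mathcal{O}((\eps/\hbar)^d)$ by the volume/doubling bound~\eqref{e:number-eps-ball}, and they have bounded overlap $C_g$; hence $\sum_{l \text{ bad}} \int_{B(\tilde{x}_l,r_0\hbar)}|\psi_\hbar^j|^2\,dv_g \le C_g \cdot \mathcal{O}((\eps/\hbar)^d) \cdot 2^{-qN} a_2\eps^d$. Choosing $N$ comparable to $\log_2(\eps/\hbar)$ — concretely the largest integer with $2^{N+2} r_0\hbar \le \eps$ — makes $2^{-qN} (\eps/\hbar)^d \lesssim (\hbar/\eps)^{qc - d}$ for some fixed $c > 0$, so taking $q = q_M$ large enough (depending only on $(M,g)$ through $c$, $C_g$, and the implied constants) forces this total bad mass to be $\le \tfrac12 a_1\eps^d$. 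Since $\int_{B(x_k,\eps)}|\psi_\hbar^j|^2\,dv_g \ge a_1\eps^d$ by~\eqref{e:vol-eps-ball} and $\mathcal{G}_k$ together with the bad balls covers $B(x_k,\eps)$, subtracting gives $\int_{\mathcal{G}_k}|\psi_\hbar^j|^2\,dv_g \ge a_1\eps^d - \tfrac12 a_1\eps^d = \tfrac12 a_1\eps^d$, which is~\eqref{MassOfG}.

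The main obstacle is bookkeeping the scales so that the geometric decay $2^{-qN}$ genuinely beats the polynomial count $(\eps/\hbar)^d$ of small balls. Because $\eps = |\log\hbar|^{-K}$ is only logarithmically large compared to $\hbar$, the number of iterations $N \sim \log(\eps/\hbar) \sim |\log\hbar|$ is still large, so the decay is comfortably super-polynomial in $(\eps/\hbar)$ and the choice of $q_M$ depends only on the ambient dimension $d$ and the covering constant $C_g$, not on $\hbar$, $\eps$ or $k$; one just has to make sure every iterate ball stays inside $B(x_k,50\eps)$, which is why the cushion between $\eps$ and $50\eps$ in Lemma~\ref{l:density1} was built in. A minor point is that the telescoping requires $r \le r_0\hbar$ at the base scale, matching the hypothesis of Proposition~\ref{p:CM}, and that we only ever compare a ball to its double, which is exactly the structure of the $q$-good/$q$-bad dichotomy — so no further regularity of $\psi_\hbar^j$ is needed here.
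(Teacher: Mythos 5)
Your overall structure (good mass $\geq$ mass in $B(x_k,\eps)$ minus mass on bad balls, then choose $q$ large) is the right one, but the central step of your bad-ball estimate is not justified. You iterate the inequality $\int_{B(\tilde{x}_l,r_0\hbar)}|\psi_{\hbar}^j|^2\,dv_g < 2^{-q}\int_{B(\tilde{x}_l,2r_0\hbar)}|\psi_{\hbar}^j|^2\,dv_g$ up to scale $\sim 50\eps$, writing $\int_{B(\tilde{x}_l,r_0\hbar)}|\psi_{\hbar}^j|^2\,dv_g < 2^{-qN}\int_{B(\tilde{x}_l,2^N r_0\hbar)}|\psi_{\hbar}^j|^2\,dv_g$. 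But being $q$-bad is a property of the single pair $\bigl(B(\tilde{x}_l,r_0\hbar),B(\tilde{x}_l,2r_0\hbar)\bigr)$: it tells you nothing about whether the concentric balls of radii $2r_0\hbar, 4r_0\hbar,\dots$ also fail the doubling condition, so the second and subsequent steps of the telescoping have no justification. (In Colding--Minicozzi-type arguments, iterated doubling bounds come from elliptic growth/frequency estimates, not from the mere failure of $q$-goodness at one scale.) As written, the claimed per-ball bound $2^{-qN}a_2\eps^d$, and hence the cancellation against the $\mathcal{O}((\eps/\hbar)^d)$ count of small balls, does not follow.

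The fix — and the paper's actual argument — is that no iteration and no counting of balls is needed. Apply the bad-ball inequality once for each bad index and sum: the doubled balls $B(\tilde{x}_l,2r_0\hbar)$ have overlap bounded by $C_g$ and are all contained in $B(x_k,2\eps)$, so
$$\sum_{l\ \mathrm{bad}}\int_{B(\tilde{x}_l,r_0\hbar)}|\psi_{\hbar}^j|^2\,dv_g \;<\; 2^{-q}\sum_{l\ \mathrm{bad}}\int_{B(\tilde{x}_l,2r_0\hbar)}|\psi_{\hbar}^j|^2\,dv_g \;\leq\; C_g 2^{-q}\int_{B(x_k,2\eps)}|\psi_{\hbar}^j|^2\,dv_g \;\leq\; C_g a_2 2^{-q}\eps^{d},$$
using the upper bound of Lemma~\ref{l:density1}. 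Choosing $q_M$ with $C_g a_2 2^{-q_M}\leq \frac{1}{2}a_1$ (which is why $q_M$ depends on $(M,g)$ through $C_g$, $a_1$, $a_2$, not only on $d$) and subtracting from the lower bound $a_1\eps^d$ gives~\eqref{MassOfG}. The point you missed is that the bounded overlap of the doubled balls already controls the sum over bad balls by a single integral over $B(x_k,2\eps)$, so the factor $2^{-q}$ alone suffices and no super-polynomial decay is required.
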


%In the following, we fix $q=q_M$.

\begin{proof}
 To show this we first note that, using Lemma~\ref{l:density1},
$$\int_{\mathcal G_k}|\psi_{\hbar}^j |^2dv_g \geq \int_{B(x_k, \epsilon)}|\psi_{\hbar}^j |^2dv_g -\int_{\mathcal B_k}|\psi_{\hbar}^j |^2dv_g \geq a_1 \eps^{d} -\int_{\mathcal B_k}|\psi_{\hbar}^j |^2dv_g,$$ where $\mathcal B_k=\bigcup_{l\in A_k-A_{q-\text{good}}^k} B(\tilde{x}_l,r_0\hbar)$. On the other hand, using Lemma~\ref{l:density1} one more time, one has
\begin{align*} \int_{\mathcal B_k}|\psi_{\hbar}^j |^2 dv_g & \leq  \sum_{l\in A_k-A_{q-\text{good}}^k} \int_{B(\tilde{x}_l, r_0\hbar)}|\psi_{\hbar}^j |^2dv_g \\ 
&   <  2^{-q} \sum_{l\in A_k-A_{q-\text{good}}^k} \int_{B(\tilde{x}_l, 2r_0\hbar)}|\psi_{\hbar}^j |^2 dv_g\\
& \leq  C_g2^{-q} \int_{B(x_k, 2 \eps)}|\psi_{\hbar}^j  |^2dv_g\\
& \leq  C_g a_2 2^{-q}\eps^{d}.
\end{align*}
Therefore, by choosing $q$ large enough so that $C_g a_2 2^{-q} \leq \half a_1$, we have proved~\eqref{MassOfG}.
\end{proof}

To estimate the number of $q$-good balls which intersect $B(x_k,\eps)$, we will estimate $\text{vol}(\mathcal G_k)$. Unfortunately, since $\mathcal G_k$ is a complicated set and, since it depends on $\hbar$ in a way which is hardly tractable, we cannot use quantum ergodicity to get a lower bound for $\text{vol}(\mathcal G_k)$. Instead, we will use our $L^p$ estimates \footnote{ Sogge's $L^p$ estimates were already used in the proof of \cite{CM}, but here we use our slightly stronger version.} from Proposition \ref{p:Lp-small-ball}. First, we write H\"older's inequality with $p=\frac{2(d+1)}{(d-1)}$ to get
\begin{equation} \label{VolG}\half a_1 \eps^{d} \leq \int_{\mathcal G_k}|\psi_{\hbar}^j |^2dv_g \leq \big( \text{vol}(\mathcal G_k) \big)^{\frac{p-2}{p}} || \psi_{\hbar}^j ||^2_{L^p(B(x_k, 2 \eps))}. \end{equation} 
By applying Proposition~\ref{p:Lp-small-ball} to \eqref{VolG}, we find the following lower bound on the volume of $\mathcal{G}_k$:
$$ \text{vol}(\mathcal G_k)\geq a_0\eps^{\frac{d+1}{2}}\hbar^{\frac{d-1}{2}},$$
for some positive constant $a_0$ that depends only on $(M,g)$ and $\chi$. This implies that
\begin{equation}\label{e:cardinal-good}\sharp A^k_{q-\text{good}}\geq \tilde{a}_0\eps^{\frac{d+1}{2}}\hbar^{-\frac{d+1}{2}},\end{equation}
where $\tilde{a}_0>0$ and depends only on $(M,g)$ and $\chi$.

\subsubsection{Conclusion} By combining Proposition \ref{p:CM} with the lower bound~\eqref{e:cardinal-good}, we find that 
for every $0<\hbar\leq\hbar_0$, for every $1\leq k\leq \tilde{R}(\eps)$, and for every $j\in\Lambda_K(\hbar)$, we have
\begin{equation} \label{e:mainineq} c_0\left(\frac{\eps}{\hbar}\right)^{\frac{d+1}{2}}\mu\hbar^{d-1}\leq\sum_{l\in A^k_{q-\text{good}}}\mathcal{H} ^{d-1} \left( Z_{\psi_{\hbar}^j} \cap B(\tilde{x}_l, r_0\hbar)\right) \leq C_g\mathcal{H} ^{d-1} \left( Z_{\psi_{\hbar}^j} \cap B(x_k, 2\eps)\right), \end{equation} 

To finish the proof of Theorem 1, let $U_r$ be an arbitrary open ball of radius $r$ (independent of $\hbar$), and $U_{r/2}$ be a ball of radius $r/2$ concentric to $U_r$. Also let $\{B(x_k, \eps):k \in A\} $ be the balls with nonempty intersection with $U_{\frac{r}{2}}$. Clearly 
$$ U_{\frac{r}{2}} \subset \bigcup_{k \in A} B(x_k, 2\eps) \subset U_r, $$
and $A$ must have at least $c \eps^{-d}$ elements (for some uniform constant $c>0$ depending on $r$). Hence using (\ref{e:mainineq}), 
$$  \mathcal{H} ^{d-1} ( Z_{\psi_{E_j(\hbar)}} \cap U_r ) \geq C \hbar^{\frac{d-3}{2}} |\log \hbar|^{\frac{K(d-1)}{2}}.$$ 
Since $0<K<\frac{1}{2d}$, we get the claimed range of exponents for our logarithmic factor.

\subsubsection{Alternative proof for $U=M$}
\label{sss:HezSog}
When $U=M$, we can conclude more quickly using inequality~\eqref{e:HeSo} from the introduction. In fact, thanks to the H\"older's inequality, one has
$$1=\left\|\psi_{\hbar}^j\right\|_{L^2}^{\frac{1}{\theta}}\leq\|\psi_{\hbar}^j\|_{L^1(M)}\|\psi_{\hbar}^j\|_{L^p(M)}^{\frac{1}{\theta}-1},$$
where $p=\frac{2(d+1)}{d-1}$ and $\theta=\frac{p-2}{2(p-1)}$. This inequality was already used in~\cite{SoZe11} (end of paragraph $1.1$). Hence by applying our improved $L^p$ estimates, we find that
$$C_1\hbar^{\frac{d-1}{4}}|\log\hbar|^{\frac{K(d-1)}{4}}\leq \|\psi_{\hbar}^j\|_{L^1(M)} ,$$
for some uniform constant $C_1>0$. The conclusion now follows from inequality \eqref{e:HeSo} and the fact that we can allow any exponent $0<K<\frac{1}{2d}$.

%We will now make use of the following result of~\cite{HS12} which is valid on any smooth compact Riemannian manifold (without any assumption on the curvature):
%\begin{theo}\label{t:HeSo} Let $\psi_{\hbar}$ be a $L^2$ normalized eigenfunction of $\Delta_g$ satisfying
%$$-\hbar^2\Delta_g\psi_{\hbar}=E\psi_{\hbar},$$
%for some $E\in[1-\alpha\hbar,1+\alpha\hbar]$. Then, one has
%$$\mathcal{H}^{d-1}\left(Z_{\psi_{\hbar}}\right)\geq C \hbar^{-1}\|\psi_{\hbar}\|_{L^1(M)}^2,$$
%for some uniform constant $C>0$. 
%\end{theo}

\appendix

\section{Moments of order $2p$}
\label{a:moments}

In this appendix, we give the proof of Proposition \ref{p:moments}. Before that, we need to recall a few results and notations from \cite{Li04} -- see 
paragraphs \ref{sss:anosov} and \ref{sss:banach}. We deduce from these properties a multicorrelation result in paragraph~\ref{sss:multicorrelation} which
generalizes Theorem~\ref{t:liverani2}. Finally, using an argument similar to Lemma $3.2$ of \cite{Ra73}, we deduce Proposition~\ref{p:moments}.

\subsection{Anosov property}\label{sss:anosov}
When $M$ is a negatively curved manifold, the geodesic flow\footnote{In this case, $(G^t)_{t\in\IR}$ is the Hamiltonian flow associated to 
$p_0(x,\xi):=\frac{(|\xi\|_x^*)^2}{2}.$} $(G^t)_{t\in\IR}$ satisfies the Anosov property on $S^*M$. Precisely, it means that, for every $\rho=(x,\xi)$ in $S^*M$, 
there exists a $G^t$-invariant splitting
\begin{equation}\label{e:anosov}
T_{\rho}T_{1/2}^*M=\IR X_0(\rho)\oplus E^s(\rho)\oplus E^u(\rho),
\end{equation}
where $X_0(\rho)$ is the Hamiltonian vector field associated to $p_0(x,\xi)=\frac{(\|\xi\|_x^*)^2}{2}$, $E^u(\rho)$ is the unstable direction and $E^s(\rho)$ is 
the stable direction. These three subspaces are preserved under the geodesic flow and there exist constants $C_0>0$ and $\gamma_0>0$ such that, for any 
$t\geq 0$, for any $v^s\in E^s(\rho)$ and any $v^u\in E^u(\rho)$,
$$\|d_{\rho}G_0^tv^s\|_{G^t_0(\rho)}\leq C_0e^{-\gamma_0 t}\|v^s\|_{\rho}\ \text{and}\ \|d_{\rho}G_0^{-t}v^u\|_{G^{-t}_0(\rho)}\leq C_0e^{-\gamma_0 t}\|v^u\|_{\rho},$$
where $\|.\|_w$ is the norm associated to the Sasaki metric on $S^*M$. 

\subsection{Banach spaces adapted to the transfer operator}\label{sss:banach}
In this setting, one can study the spectral properties of the transfer operator which is defined as follows:
$$\forall f\in\ml{C}^{\infty}(S^*M),\ \ml{L}_t f=f\circ G^{-t}.$$
This spectral analysis has a long history and many progresses have been made recently \cite{Ra87, Dol98, Li04, Ts10, FaSj11, Ts12, BaLi12, GiLiPo13, DyZw13, FaTs13, NoZw13}. We will not give any details on these advancements and we refer the interested reader to the above references. 

In the present article, we will only make use of the results from~\cite{Li04}, and for that purpose, we recall a few definitions 
from this reference. We fix $\sigma\in(0,\gamma_0)$, and we introduce the following dynamical distances between two points $\rho,\rho'\in S^*M$:
$$\ d^s(\rho,\rho'):=\int_0^{+\infty}e^{\sigma t} d_{S^*M}(G^t(\rho),G^t(\rho'))dt,$$
and
$$d^u(\rho,\rho'):=\int_{-\infty}^0e^{-\sigma t} d_{S^*M}(G^t(\rho),G^t(\rho'))dt.$$
According to Lemma $2.3$ in~\cite{Li04}, $d^u$ is a pseudo-distance on $S^*M$ (meaning that it can take the value $+\infty$). Moreover, $d^u$, restricted to any 
strong-unstable manifold, is a smooth function and it is equivalent to the restriction of the Riemannian metric, while points belonging
to different unstable manifolds are at an infinite distance. The analogous properties hold for $d^s$. These distances allow to define different norms which are 
well adapted to the study of the transfer operator $\ml{L}_t$. First, for $0<\beta<1$, $\delta>0$ small enough and for every $f$ in $\ml{C}^1(S^*M)$, we define
$$H_{s,\beta}(f):=\sup_{d^s(\rho,\rho')\leq\delta}\frac{|f(\rho)-f(\rho')|}{d^s(\rho,\rho')^{\beta}},$$
and
$$|f|_{s,\beta}:=\|f\|_{\ml{C}^0}+H_{s,\beta}(f).$$
We observe that this norm is controlled by the standard $\beta$-H\"older norm, i.e. $|f|_{s,\beta}\leq\|f\|_{\ml{C}^{\beta}}.$ Then, we define $\ml{C}^{\beta}_s(S^*M)\subset 
\ml{C}^{0}(S^*M)$ as the completion of $\ml{C}^{1}(S^*M)$ with respect to the norm $|.|_{s,\beta}$. We also introduce a family of test functions
$$\ml{D}_{\beta}:=\{f\in\ml{C}^{\beta}_s(S^*M):|f|_{s,\beta}\leq 1\}.$$ Finally, we define the following norm, for every $f$ in $\ml{C}^{1}(S^*M)$:
$$\|f\|_{\beta}:=\|f\|_u+\|f\|_s,$$
with
$$\|f\|_s:=\sup_{g\in\ml{D}_{\beta}}\left|\int_{S^*M}fgdL\right|,\ \text{and}\ \|f\|_u:=H_{u,\beta}(f),$$
where $L$ is the disintegration of the Liouville measure on $S^*M$, and $H_{u,\beta}$ is defined similarly as $H_{s,\beta}$ except that we replace $d^s$ by $d^u$. We observe that
\begin{equation}\label{e:control-norms}
 \|f\|_s\leq \|f\|_{L^1}\leq\|f\|_{\ml{C}^{0}}.
\end{equation}
In order to prove his main result on the exponential decay of correlations, Liverani proved the following estimate (end of page~$1284$ in~\cite{Li04}):
\begin{equation}\label{e:spectral-gap}
\forall t\geq 0,\ \left\|\ml{L}_t(f)\right\|_s \leq C_{\beta} e^{-\sigma_{\beta} t}\left(\|X_0^2.f\|_{\beta}+\|X_0.f\|_{\beta}+\|f\|_{\beta}\right)
\end{equation}
 for every $\beta>0$ small enough, for every $f\in\ml{C}^3(S^*M)$ satisfying $\int_{S^*M}fdL=0$, and for some positive constants $C_{\beta}>0$ and $\sigma_{\beta}>0$ (independent of $f$ and $t$). 

\subsection{Multi-correlations}\label{sss:multicorrelation}

Thanks to these properties, we now prove the following preliminary lemma:
\begin{lemm}\label{l:multicorrelation} Let $p$ be a positive integer and let $0<\beta<1$. Then, there exists $\sigma_{p,\beta}>0$ 
and $C_{p,\beta}>0$ such that the following holds:\\
For every $\tau>0$, for every $f$ in $\ml{C}^{\beta}(S^*M)$ satisfying $\int_{S^*M}fdL=0$, and for every $0\leq t_1\leq t_2\leq\ldots\leq t_{2p}$ 
satisfying
$$\exists 1\leq j_0\leq 2p\ \text{such that}\ \forall k\neq j_0,\ |t_k-t_{j_0}|\geq\tau,$$ 
 one has
 $$\left|\int_{S^*M}\prod_{j=1}^{2p} f\circ G^{t_j}dL\right|\leq C_{p,\beta}e^{-\sigma_{p,\beta}\tau}\|f\|_{\ml{C}^{\beta}}^{2p}.$$
\end{lemm}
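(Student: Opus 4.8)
The plan is to argue by induction on the number $n$ of factors, proving the slightly more flexible statement: for every $n\ge 1$ there are $\sigma_{n,\beta}>0$ and $C_{n,\beta}>0$ such that, whenever $t_1\le\cdots\le t_n$ are real and some index $j_0$ satisfies $|t_k-t_{j_0}|\ge\tau$ for all $k\ne j_0$, one has
$$\left|\int_{S^*M}\prod_{j=1}^{n}f\circ G^{t_j}\,dL\right|\le C_{n,\beta}\,e^{-\sigma_{n,\beta}\tau}\,\|f\|_{\ml{C}^{\beta}}^{\,n}$$
for every $f\in\ml{C}^{\beta}(S^*M)$ with $\int_{S^*M}f\,dL=0$; Lemma~\ref{l:multicorrelation} is the case $n=2p$. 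The case $n=1$ is immediate, the left-hand side being $\int f\,dL=0$.

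For the inductive step with $n\ge 2$, I would first use the time-reversal symmetry of the geodesic flow (the map $(x,\xi)\mapsto(x,-\xi)$ conjugates $(G^t)$ with $(G^{-t})$, preserves $L$, and is an $\ml{C}^{\beta}$-isometry) to assume that at least one $t_k$ exceeds $t_{j_0}$, and then translate time so that $t_{j_0}=0$. The remaining indices then split into a set $P$ with $t_j\le-\tau$ for $j\in P$ and a \emph{nonempty} set $Q$ with $t_j\ge\tau$ for $j\in Q$. Setting $A:=f\cdot\prod_{j\in P}f\circ G^{t_j}$, $h:=\prod_{j\in Q}f\circ G^{t_j-\tau}$ and $\bar h:=h-\int_{S^*M}h\,dL$, one has $\prod_{j\in Q}f\circ G^{t_j}=h\circ G^{\tau}$, hence by invariance of $L$
$$\int_{S^*M}\prod_{j=1}^{n}f\circ G^{t_j}\,dL=\int_{S^*M}A\cdot(\bar h\circ G^{\tau})\,dL+\Big(\int_{S^*M}h\,dL\Big)\Big(\int_{S^*M}A\,dL\Big).$$
The second summand is at most $\|f\|_{\ml{C}^{0}}^{|Q|}\big|\int A\,dL\big|$, and $\int_{S^*M}A\,dL=\int_{S^*M}f\cdot\prod_{j\in P}f\circ G^{t_j}\,dL$ is a correlation of the same shape with $1+|P|<n$ factors, the distinguished factor $f$ still $\tau$-separated from all others; by the induction hypothesis it is $\ml{O}\!\big(e^{-\sigma_{1+|P|,\beta}\tau}\|f\|_{\ml{C}^{\beta}}^{\,1+|P|}\big)$, so this summand is $\ml{O}\!\big(e^{-\sigma_{1+|P|,\beta}\tau}\|f\|_{\ml{C}^{\beta}}^{\,n}\big)$. (When $P=\emptyset$ the distinguished factor is $f$ itself and $\int A\,dL=0$, so this summand vanishes.)

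The first summand is where \eqref{e:spectral-gap} enters. By invariance of $L$ and $\ml{L}_{\tau}\mathbf 1=\mathbf 1$, using $\int\bar h\,dL=0$,
$$\int_{S^*M}A\cdot(\bar h\circ G^{\tau})\,dL=\int_{S^*M}\ml{L}_{\tau}(A)\,\bar h\,dL=\int_{S^*M}\ml{L}_{\tau}\Big(A-\!\int A\,dL\Big)\,\bar h\,dL,$$
which is bounded by $\big\|\ml{L}_{\tau}(A-\int A\,dL)\big\|_{s}\,|\bar h|_{s,\beta}$ by the definition of $\|\cdot\|_{s}$ against the test class $\ml{D}_{\beta}$. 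Two structural facts make both factors controllable \emph{uniformly in the $t_j$}. First, $A$ is $f$ times copies of $f$ composed with $G^{t}$, $t\le-\tau<0$; since the flow contracts unstable directions in negative time, each such composition keeps a bounded unstable H\"older seminorm, $H_{u,\beta}(f\circ G^{t})\le C\|f\|_{\ml{C}^{\beta}}$ for $t\le0$, while the $\|\cdot\|_{s}$-part of $\|\cdot\|_{\beta}$ is harmless by \eqref{e:control-norms}; replacing $f$ by a mollification $f_{\delta}$ along the (neutral) flow direction produces a $\ml{C}^{3}$ approximant $A_{\delta}$ with $\|A-A_{\delta}\|_{\ml{C}^{0}}\le C\delta^{\beta}\|f\|_{\ml{C}^{\beta}}$ and, each $X_0$ costing one factor $\delta^{-1}$, with $\|X_0^2A_{\delta}\|_{\beta}+\|X_0A_{\delta}\|_{\beta}+\|A_{\delta}-\int A_{\delta}\|_{\beta}\le C\delta^{-2}\|f\|_{\ml{C}^{\beta}}^{\,1+|P|}$, so \eqref{e:spectral-gap} yields $\|\ml{L}_{\tau}(A_{\delta}-\int A_{\delta})\|_{s}\le C\delta^{-2}e^{-\sigma_{\beta}\tau}\|f\|_{\ml{C}^{\beta}}^{\,1+|P|}$. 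Dually, $\bar h$ is built from copies of $f$ composed with $G^{t}$, $t\ge0$, which keep a bounded stable H\"older seminorm, so $|\bar h|_{s,\beta}\le C\|f\|_{\ml{C}^{\beta}}^{|Q|}$ uniformly. Bounding the mollification error crudely by $\|\cdot\|_{L^{1}}\le\|\cdot\|_{\ml{C}^{0}}$, the first summand is $\le C\big(\delta^{-2}e^{-\sigma_{\beta}\tau}+\delta^{\beta}\big)\|f\|_{\ml{C}^{\beta}}^{\,n}$, and the choice $\delta=e^{-\sigma_{\beta}\tau/(2+\beta)}$ makes it $\le Ce^{-\sigma_{\beta}\beta\tau/(2+\beta)}\|f\|_{\ml{C}^{\beta}}^{\,n}$. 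Adding the two contributions closes the induction with $\sigma_{n,\beta}=\min\{\sigma_{\beta}\beta/(2+\beta),\ \sigma_{1+|P|,\beta}\}$, which remains positive (it can in fact be taken independent of $n$).

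I expect the only genuine difficulty to be the uniform-in-$t_j$ regularity bookkeeping of the previous paragraph: one must check that a product of \emph{past}-flowed copies of $f$ has a bounded unstable seminorm, and dually that a product of \emph{future}-flowed copies has a bounded stable seminorm, with no loss depending on how far apart the $t_j$ are. This is exactly the ``past $\leftrightarrow$ unstable, future $\leftrightarrow$ stable'' mechanism, and it is tailored to the shape of \eqref{e:spectral-gap}, which controls $\|\ml{L}_{\tau}(\cdot)\|_{s}$ by $\|\cdot\|_{\beta}$ and its $X_0$-derivatives. The passage from $f\in\ml{C}^{\beta}$ to the $\ml{C}^{3}$ regularity required by \eqref{e:spectral-gap} is the same mollification device already needed to feed Theorem~\ref{t:liverani2} (stated for H\"older data) into Liverani's $\ml{C}^{3}$ estimate, and it costs only a fixed power of the mollification scale; mollifying along the flow direction alone suffices, and, that direction being neutral, it does not disturb the stable/unstable seminorm bounds. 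The combinatorial reduction in the inductive step mirrors the scheme of Lemma~3.2 in~\cite{Ra73}.
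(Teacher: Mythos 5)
Your overall mechanism is exactly the paper's: split the product at the distinguished time gap, bound the future block by its stable H\"older seminorm $H_{s,\beta}$, feed the centered past block into Liverani's estimate \eqref{e:spectral-gap} via the duality defining $\|\cdot\|_s$, treat the product-of-means term by re-running the argument on the shorter past block (your induction is the paper's iteration, and your time-reversal trick just replaces the paper's direct handling of the case $j_0=2p$), and finally regularize to bridge $\ml{C}^{\beta}$ data with the $\ml{C}^3$ regularity required by \eqref{e:spectral-gap}, choosing the mollification scale exponentially small in $\tau$.

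The gap is in the regularization step, the very one you flag as delicate. You mollify the assembled past block $A$ only along the flow direction and then apply \eqref{e:spectral-gap} to $A_\delta$. But \eqref{e:spectral-gap}, as quoted from \cite{Li04}, is an estimate for observables in $\ml{C}^3(S^*M)$; a flow-mollified $A_\delta$ is smooth along $X_0$ but still only H\"older transversally, so the estimate does not apply to it as stated, and extending it to such functions means re-entering Liverani's functional-analytic setup (membership of $A_\delta$ in the completion with a controlled anisotropic norm $\|\cdot\|_{\beta}$, a leafwise norm that does not interact transparently with mollification). The seemingly harmless repair --- mollify $A$ isotropically --- is also not free: your uniform-in-$(t_1,\dots,t_{2p})$ control is only on the unstable-leafwise seminorm $H_{u,\beta}(A)$, whereas the full transverse H\"older norm of the past block grows exponentially in the time spread, and since the unstable foliation is merely H\"older, an isotropic mollification does not simply inherit the leafwise bound. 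The paper sidesteps all of this by regularizing \emph{before} forming the product: replace $a\in\ml{C}^{\beta}$ by $f\in\ml{C}^3$ with $\|a-f\|_{\ml{C}^0}\leq e^{-\beta\gamma\tau}\|a\|_{\ml{C}^{\beta}}$ and $\|f\|_{\ml{C}^3}\leq e^{3\gamma\tau}\|a\|_{\ml{C}^{\beta}}$; then every block is genuinely $\ml{C}^3$, \eqref{e:spectral-gap} applies verbatim and gives the $\ml{C}^3$ multicorrelation bound \eqref{e:multiC3} with constant $\|f\|_{\ml{C}^3}^{2p}$, and the choice $\gamma=\sigma_p/(10p)$ absorbs the loss $e^{6p\gamma\tau}$. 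If you restate your inductive step for $\ml{C}^3$ observables and perform the single mollification at the level of $f$ at the end, your sketch becomes the paper's proof.
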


\begin{proof} We start with the case of $\ml{C}^3$ observables. Let $f$ be an element in $\ml{C}^3(S^*M)$ such that $\int_{S^*M}fdL=0$, and let $\tau>0$. We fix\footnote{The case $j_0=2p$ will be treated below.} $1\leq j_0\leq 2p-1$ such that $\forall k\neq j_0,\ |t_k-t_{j_0}|\geq\tau.$ We define 
$$f_{j_0}^+:=\prod_{j=1}^{j_0} f\circ G^{t_j-t_{j_0}},\ \text{and}, f_{j_0}^-:=\prod_{j=j_0+1}^{2p} f\circ G^{t_j-t_{j_0+1}}.$$
We set $\tilde{f}_{j_0}^+:=f_{j_0}^+-\int_{S^*M}f_{j_0}^+dL$. Then, we write
 $$\int_{S^*M}\prod_{j=1}^{2p} f\circ G^{t_j}dL=\int_{S^*M}f_{j_0}^+\circ G^{t_{j_0}-t_{j_0+1}}
f_{j_0}^-dL,$$
and thus
 $$\int_{S^*M}\prod_{j=1}^{2p} f\circ G^{t_j}dL=\int_{S^*M}f_{j_0}^+dL\int_{S^*M}f_{j_0}^-dL+ \int_{S^*M}\tilde{f}_{j_0}^+\circ G^{t_{j_0}-t_{j_0+1}}
f_{j_0}^-dL.$$
We now use the norms defined above to give an estimate on the remainder, i.e.
$$\left|\int_{S^*M}\tilde{f}_{j_0}^+\circ G^{t_{j_0}-t_{j_0+1}}
f_{j_0}^-dL\right|\leq\left\|\ml{L}_{t_{j_0+1}-t_{j_0}}\left(\tilde{f}_{j_0}^+\right)\right\|_s
H_{s,\beta}\left(f_{j_0}^-\right).$$
Since $t_j-t_{j_0+1}\geq 0$ for every $j\geq j_0+1$, we get from the definition of $H_{s,\beta}$ that 
$$H_{s,\beta}\left(f_{j_0}^-\right)=\ml{O}(1)\|f\|_{\ml{C}^{\beta}}^{2p-j_0},$$
where the constant does not depend on $f$ and $t=(t_1,\ldots,t_{2p})$. Similarly, as $t_j-t_{j_0}\leq 0$ for $j\leq j_0$, using the definition of $H_{u,\beta}$ and inequality~\eqref{e:control-norms}, one can verify that, for every $l=0,1,2$, one has $\|X_0^l.\tilde{f}_{j_0}^+\|_{\beta}=\ml{O}(1)\|f\|_{\ml{C}^{3}}^{j_0}$. Applying these observations to Liverani's estimate~\eqref{e:spectral-gap}, we get that
 $$\int_{S^*M}\prod_{j=1}^{2p} f\circ G^{t_j}dL=\int_{S^*M}f_{j_0}^+dL\int_{S^*M}f_{j_0}^-dL+\ml{O}(1)e^{-\sigma_0(t_{j_0+1}-t_{j_0})}\|f\|_{\ml{C}^{3}}^{2p},$$
where the constant $\ml{O}(1)$ does not depend on $f$ and $t=(t_1,\ldots,t_{2p})$. If $j_0=1$, this proves the result. In the case $2\leq j_0\leq 2p$, we have to analyze the term $\int_{S^*M}f_{j_0}^+dL$. For that purpose, we can write 
$$\int_{S^*M}\prod_{j=1}^{j_0} f\circ G^{t_j-t_{j_0}}dL=\int_{S^*M}\left(\prod_{j=1}^{j_0-1} f\circ G^{t_j-t_{j_0-1}}\right)\circ G^{t_{j_0-1}-t_{j_0}} fdL,$$
and perform the same analysis to get the expected upper bound. Thus, we have verified that there exists $C_p>0$ and $\sigma_p>0$ such that, for every $\tau>0$, for every $f$ in $\ml{C}^{3}(S^*M)$ satisfying $\int_{S^*M}fdL=0$, and for every $0\leq t_1\leq t_2\leq\ldots\leq t_{2p}$ 
satisfying
$$\exists 1\leq j_0\leq 2p\ \text{such that}\ \forall k\neq j_0,\ |t_k-t_{j_0}|\geq\tau,$$ 
one has
\begin{equation}\label{e:multiC3}\left|\int_{S^*M}\prod_{j=1}^{2p} f\circ G^{t_j}dL\right|\leq C_{p}e^{-\sigma_{p}\tau}\|f\|_{\ml{C}^{3}}^{2p}.
\end{equation}
However recall that we are interested in getting a control in terms of H\"older norms. To do this, we proceed as in Corollary $1$ of \cite{Dol98}. We fix $0<\beta<1$ and $a$ in $\ml{C}^{\beta}(S^*M)$ such that $\int_{S^*M} adL=0$. Fix $\gamma=\frac{\sigma_p}{10p}$. 
Using a convolution by a smooth function, one can obtain a function $f$ in $\ml{C}^3(S^*M)$ such that
$$\|a-f\|_{\ml{C}^0}\leq e^{-\beta \gamma \tau}\|a\|_{\ml{C}^{\beta}},\ \text{and}\ \|f\|_{\ml{C}^3}\leq e^{3\gamma \tau}\|a\|_{\ml{C}^{\beta}}.$$
If we let $\tilde{f}=f-\int_{S^*M}fdL$, then it follows that there exists some constant $C(p)>0$ such that
$$\left|\int_{S^*M}\prod_{j=1}^{2p} a\circ G^{t_j}dL-\int_{S^*M}\prod_{j=1}^{2p} \tilde{f}\circ G^{t_j}dL\right|\leq C(p) e^{-\beta\gamma\tau}\|a\|_{\ml{C}^{\beta}}^{2p}.$$
Applying~\eqref{e:multiC3}, we find that there exists some constant $C(p,\beta)>0$ such that
$$\left|\int_{S^*M}\prod_{j=1}^{2p} a\circ G^{t_j}dL\right|\leq C(p,\beta) (e^{(6p\gamma -\sigma_p)\tau}+e^{-\beta\gamma\tau})\|a\|_{\ml{C}^{\beta}}^{2p}.$$
Since $\gamma=\frac{\sigma_p}{10p}$, the Lemma follows.
\end{proof}

\subsection{Proof of Proposition~\ref{p:moments}} Recall that we want to prove that, for every $0<\beta<1$, there exist a constant 
$C_{\beta,p}>0$ such that, for every $T>0$, and for every $a\in\ml{C}^{\beta}(S^*M)$,
$$\tilde{V}_p(a,T):=\int_{S^*M}\left|\int_0^Ta\circ G^tdt-\int_{S^*M}adL\right|^{2p}dL\leq C_{\beta,p}\|a\|_{\ml{C}^{\beta}}^{2p}T^p.$$

Without loss of generality, we can suppose that $\int_{S^*M}a dL=0$ and that $a$ is real valued. We write
$$\tilde{V}_p(a,T)=\int_{[0,T]^{2p}}\left(\int_{S^*M}\prod_{j=1}^{2p} a\circ G^{t_j}dL\right) dt_1\ldots dt_{2p}.$$
We now follow the idea of Lemma $3.2$ of \cite{Ra73}, and define
$$A_1(T):=\left\{(t_1,\ldots, t_{2p})\in[0,T]^{2p}:\ \forall 1\leq j\leq 2p,\ \exists k\neq j: |t_j-t_k|< 1\right\}.$$
For $n\geq 2$, we also define
$$B_n(T):=\left\{(t_1,\ldots, t_{2p})\in [0, T]^{2p} :\ \forall 1\leq j\leq 2p,\ \exists k\neq j: |t_j-t_k|< n\right\},$$and
$$ A_n(T)=B_n(T)-B_{n-1}(T). $$
We then clearly have
$$\tilde{V}_p(a,T)\leq\sum_{n\geq 1}\int_{A_n(T)}\left|\int_{S^*M}\prod_{j=1}^{2p} a\circ G^{t_j}dL\right| dt_1\ldots dt_{2p},$$
which by Lemma~\ref{l:multicorrelation} gives 
\begin{equation}\label{e:concl-variance}\tilde{V}_p(a,T)\leq C_{p,\beta}\left(\text{Leb}(A_1(T)) +\sum_{n\geq 2}\text{Leb}(A_n(T))e^{-\sigma_{p,\beta}{(n-1)}}\right)\|a\|_{\ml{C}^{\beta}}^{2p}.\end{equation}
It now remains to estimate $\text{Leb}(A_n(T))$ for every $n\geq 1$. For every $n\geq 1$, using a simple rescaling we get
\begin{equation}\label{A_n} \text{Leb}(A_n(T)) \leq \text{Leb}(B_n(T)) = n^{2p} \text{Leb}(A_1(T/n)) \leq n^{2p} \text{Leb}(A_1(T)). \end{equation}
Hence we only have to estimate $\text{Leb}(A_1(T))$. To do this we write
$$\text{Leb}(A_1(T))\leq \int_{[0,T]^{2d}}\prod_{j=1}^{2p}\left(\sum_{k\neq j}\mathbf{1}_{[-1,1]}(t_j-t_k)\right)dt_1\ldots d_{t_{2p}}.$$ 
The function $\prod_{j=1}^{2p}\left(\sum_{k\neq j}\mathbf{1}_{[-1,1]}(t_j-t_k)\right)$ does not vanish if and only if we can split the $2p$-uple 
$(t_1,\ldots, t_{2p})$ into $N$ disjoint subfamilies $(t_{\alpha_1^1},\ldots,t_{\alpha_{n(1)}^1}),\ldots ,(t_{\alpha_1^N},\ldots,t_{\alpha_{n(N)}^N})$ such that
\begin{itemize}
\item for every $1\leq j\leq N$, $n(j)\geq 2$, 
\item for every $t_{\alpha_r^j}$ element in the $j$-th subfamily, there exists $1\leq r'\neq r\leq n(j)$ such that $|t_{\alpha_{r}^j}-t_{\alpha_{r'}^j}|\leq 1$.
\end{itemize}
It implies that there exists some universal constant $c_p>0$ (depending only on $p$) such that $\text{Leb}(A_1(T))\leq c_pT^p.$ This, together with \eqref{e:concl-variance} and \eqref{A_n}, implies that
$$\tilde{V}_p(a,T)\leq \tilde{C}_{p}T^p\|a\|_{\ml{C}^{\beta}}^{2p}.$$

\section*{Acknowledgments}

The first author is partially supported by the NSF grant DMS-0969745. The second author is partially supported by the Agence Nationale de la Recherche through the Labex CEMPI (ANR-11-LABX-0007-01) and the ANR project GeRaSic (ANR-13-BS01-0007-01). 

%We thank Henrik Uebersch\"ar for a discussion related to the proof of Proposition~\ref{p:moment-torus} and Fabricio Maci\` a for indicating us the reference~\cite{Bo13}.

\end{document}